\newtheorem{theorem}{Theorem}[section]
\newtheorem{lemma}[theorem]{Lemma}
\newtheorem{proposition}[theorem]{Proposition}
\newtheorem{corollary}[theorem]{Corollary}
\theoremstyle{definition}
\newtheorem{definition}[theorem]{Definition}
\newtheorem{example}[theorem]{Example}
\newtheorem{remark}[theorem]{Remark}
\numberwithin{equation}{section}
\begin{document}
	
	\setcounter{page}{1}
	
	\title{Operator frame for $Hom_{\mathcal{A}}^{\ast}(\mathcal{X})$}
	
	\author[R. Eljazzar, M. Rossafi, C. Park]{Roumaissae Eljazzar$^{1}$, Mohamed Rossafi$^2$ {and} Choonkil Park$^{3*}$}
	
\address{$^{1}$Department of Mathematics, Faculty of Sciences, University of Ibn Tofail, Kenitra, Morocco}
	\email{\textcolor[rgb]{0.00,0.00,0.84}{roumaissae.eljazzar@uit.ac.ma}}
	
\address{$^{2}$LASMA Laboratory Department of Mathematics Faculty of Sciences, Dhar El Mahraz University Sidi Mohamed Ben Abdellah, Fes, Morocco}
\email{\textcolor[rgb]{0.00,0.00,0.84}{rossafimohamed@gmail.com; mohamed.rossafi@usmba.ac.ma}}

\address{$^{3}$Research Institute for Natural Sciences, Hanyang University, Seoul 04763,  Korea}
\email{\textcolor[rgb]{0.00,0.00,0.84}{baak@hanyang.ac.kr}}

	\subjclass[2010]{Primary 42C15,  46L05.}
	
	\keywords{Frame;  Operator frame;  Pro-$C^{\ast}$-algebra;  Hilbert pro-$C^{\ast}$-modules;  Tensor product.\\ $^{*}$Corresponding author: Choonkil Park (email: baak@hanyang.ac.kr, fax: +82-2-2281-0019).}

	\setcounter{page}{1}
	
	\title[Operator frame for $Hom_{\mathcal{A}}^{\ast}(\mathcal{X})$]{Operator frame for $Hom_{\mathcal{A}}^{\ast}(\mathcal{X})$}
	
	 \maketitle
\begin{abstract}
	 The concept of operator frame can be considered as a generalization of frame. Firstly, we  introduce the notion of operator frame for the set of all adjointable operators $Hom_{\mathcal{A}}^{\ast}(\mathcal{X})$ on a Hilbert pro-$C^{\ast}$-module $\mathcal{X}.$ The analysis operator, the synthesis operator and the frame operator are presented. Secondly,  we study the stability of operator frame under small
	 perturbations. We also study the tensor product of operator frame  for Hilbert pro-$C^{\ast}$-modules. Finally, we establish its dual and some properties.
\end{abstract}

\section{Introduction}

The theory of frames for Hilbert spaces was first introduced in 1952 by Duffin and Schaefer \cite{Duf} for the study of nonharmonic series. Later,  Frank and Larson \cite{M.Frank, Frank} introduced the notion of standard frame of multipliers  for a Hilbert $C^{\ast}$-module and proved some of their basic properties.
In 2008, Joita \cite{Joita} proposed frames of multipliers in Hilbert pro-$C^{\ast}$-modules and showed  that many properties of frames in Hilbert $C^{\ast}$-modules are valid for frames of multipliers in Hilbert modules over pro-$C^{\ast}$-algebras.

Operator frames for $B(\mathcal{H})$ is a new notion of frames, which was  introduced  by  Li and Cao \cite{Cio} and was more generalized by Rossafi in \cite{Rossafi}. For more information and properties on frames, see \cite{bus,   nik,  san}. 

The aim of this work is to introduce the notion of operator frame for the space $Hom_{\mathcal{A}}^{\ast}(\mathcal{X})$ of all adjointable operators on a Hilbert pro-$C^{\ast}$-module for $\mathcal{X}$. We begin by recalling  some fundamental definitions
and notations of Hilbert pro-$C^{\ast}$-modules. In Section 2, we give the definition of operator frame and some properties. In Section 3, we study the stabilty of operator frame. In Section 4, we investigate the tensor product of Hilbert pro-$C^{\ast}$-modules and  we show that the  tensor product of operator frames for Hilbert pro-$C^{\ast}$-modules $\mathcal{X}$ and $\mathcal{Y}$ is an operator frame for $\mathcal{X} \otimes \mathcal{Y}$. Finally, the dual of operator frame and some properties are discussed.

\section{Preliminaries}

Recall that a pro-$C^{\ast}$-algebra is a complete Hausdorff complex topological $\ast$-algebra $\mathcal{A}$ whose topology is determined by its continuous $C^{\ast}$-seminorms in the sense that a net $\{a_{\alpha}\}$ converges to $0$ if 	and only if $p(a_{\alpha})$ converges to $0$ for all continuous $C^{\ast}$-seminorms $p$ on $\mathcal{A}$ (see \cite{Inoue,Lance,Philips}), and we have\\
\begin{enumerate}
	\item[(1)] $p(ab) \leq p(a)p(b)$,
	\item[(2)] $p(a^{\ast}a)=p(a)^{2}$
\end{enumerate}
for all  $a , b \in \mathcal{A}$.

If the topology of pro-$C^{*}$-algebra is determined by only countably many $C^{*}$-seminorms, then it is called
a $\sigma$-$C^{*}$-algebra.\\
We denote by $sp(a)$ the spectrum of $a$ such that $sp(a)=\left\{\lambda \in \mathbb{C}: \lambda 1_{\mathcal{A}}-a\right.$ is not invertible$\}$ for each  $a \in \mathcal{A}$,  where $\mathcal{A}$ is a unital pro-$C^{*}$-algebra with unit $1_{\mathcal{A}}$.\\
The set of all continuous $C^{\ast}$-seminorms on $\mathcal{A}$ is denoted by $S(\mathcal{A})$.
If $\mathcal{A}^{+}$ denotes the set of all positive elements of $\mathcal{A}$, then $\mathcal{A}^{+}$ is a closed convex $C^{*}$-seminorms on $\mathcal{A}.$ 
\begin{example}
	Every $C^{*}$-algebra is a pro-$C^{*}$-algebra.
	\end{example}

\begin{proposition} 
	Let $\mathcal{A}$ be a unital pro-$C^{*}$-algebra with unit  $1_{\mathcal{A}}.$ Then for any $p \in S(\mathcal{A}),$ we have
	\begin{enumerate}
		\item[(1)] $p(a)= p(a^{*})$ for all $a \in A$;
		\item[(2)] $p \left(1_{\mathcal{A}}\right)=1$;
		\item[(3)] If $a, b \in \mathcal{A}^{+}$ and $a \leq b$, then $ p(a) \leq p(b)$;
		\item[(4)] If $1_{\mathcal{A}} \leq b$, then $b$ is invertible and $b^{-1} \leq 1_{\mathcal{A}}$;
		\item[(5)] If $a, b \in \mathcal{A}^{+}$ are invertible and $0 \leq a \leq b$, then $0 \leq b^{-1} \leq a^{-1}$;
		\item[(6)] If $a, b, c \in \mathcal{A}$ and $a \leq b$, then $c^{*} a c \leq c^{*} b c$;
		\item[(7)] If $a, b \in \mathcal{A}^{+}$ and $a^{2} \leq b^{2}$, then $0 \leq a \leq b$.
	\end{enumerate}
\end{proposition}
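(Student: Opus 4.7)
The plan is to reduce every assertion to the corresponding well-known fact about (genuine) $C^{*}$-algebras via the Arens--Michael machinery. Fix $p \in S(\mathcal{A})$. Then $\ker(p) = \{a \in \mathcal{A} : p(a) = 0\}$ is a closed two-sided $*$-ideal by the $C^{*}$-seminorm axioms, the quotient $\mathcal{A}/\ker(p)$ carries a well-defined norm $\|\pi_p(a)\|_p := p(a)$, and its completion $\mathcal{A}_p$ is a unital $C^{*}$-algebra with unit $\pi_p(1_{\mathcal{A}})$. The canonical map $\pi_p : \mathcal{A} \to \mathcal{A}_p$ is a continuous unital $*$-homomorphism, hence sends $\mathcal{A}^{+}$ into $\mathcal{A}_p^{+}$ and preserves the order. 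This transfer, together with the fact that $\mathcal{A}$ is the inverse limit of the family $\{\mathcal{A}_p\}_{p \in S(\mathcal{A})}$, is the only machinery needed beyond classical $C^{*}$-theory.

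For (1) and (2) the computation $p(a^{*}) = \|\pi_p(a)^{*}\|_p = \|\pi_p(a)\|_p = p(a)$ and $p(1_{\mathcal{A}}) = \|1_{\mathcal{A}_p}\|_p = 1$ is immediate. For (3), $a \leq b$ gives $\pi_p(a) \leq \pi_p(b)$ in $\mathcal{A}_p$, and the $C^{*}$-norm is monotone on positives. Item (6) is purely algebraic: if $b - a \geq 0$, write $b - a = d^{*}d$ using the square root (available via the continuous functional calculus in the pro-$C^{*}$-setting), so $c^{*}(b-a)c = (dc)^{*}(dc) \geq 0$. Item (7) follows by pushing to each $\mathcal{A}_p$, invoking operator monotonicity of $t \mapsto t^{1/2}$ in the $C^{*}$-quotient, and reading the inequality back through $\pi_p$ using the fact that $a \leq b$ in $\mathcal{A}$ is equivalent to $\pi_p(a) \leq \pi_p(b)$ holding for every $p \in S(\mathcal{A})$.

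Items (4) and (5) are the delicate ones. For (4), the hypothesis $1_{\mathcal{A}} \leq b$ gives $1_{\mathcal{A}_p} \leq \pi_p(b)$ in each $\mathcal{A}_p$, so $\pi_p(b)$ is invertible with $\pi_p(b)^{-1} \leq 1_{\mathcal{A}_p}$ by the classical $C^{*}$-statement. I would then lift invertibility back to $\mathcal{A}$ using the spectral identity $sp_{\mathcal{A}}(b) = \overline{\bigcup_{p} sp_{\mathcal{A}_p}(\pi_p(b))} \subseteq [1,\infty)$, from which $0 \notin sp_{\mathcal{A}}(b)$, so $b^{-1}$ exists in $\mathcal{A}$; compatibility of the local inverses $\pi_p(b)^{-1}$ then forces $\pi_p(b^{-1}) = \pi_p(b)^{-1} \leq 1_{\mathcal{A}_p}$ for every $p$, which is precisely $b^{-1} \leq 1_{\mathcal{A}}$. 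Item (5) reduces to (4) by the standard trick: $0 \leq a \leq b$ with $a$ invertible yields $1_{\mathcal{A}} \leq a^{-1/2} b a^{-1/2}$, so applying (4) and then conjugating by $b^{-1/2}$ (and using (6)) yields $b^{-1} \leq a^{-1}$.

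The main obstacle I expect is precisely the invertibility lift in (4): the leap from ``invertible in every $\mathcal{A}_p$ with norm-controlled inverses'' to ``invertible in $\mathcal{A}$'' is not formal, and depends on the Arens--Michael description of $\mathcal{A}$ as an inverse limit together with the spectral relation above. Once that is in hand, everything else is either a direct application of the $C^{*}$-result in $\mathcal{A}_p$ or a manipulation using (6) and square roots inside $\mathcal{A}$ itself.
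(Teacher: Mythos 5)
The paper never proves this proposition: it is stated in the preliminaries as standard background on unital pro-$C^{*}$-algebras (these are classical facts going back to Inoue and Phillips), so there is no in-paper argument to compare yours against. Your Arens--Michael reduction is the standard route and is essentially correct. Items (1)--(3), (6), (7) go through as you describe, and you correctly isolate the only genuinely nontrivial inputs: that positivity in $\mathcal{A}=\varprojlim_{p}\mathcal{A}_{p}$ is detected coordinatewise, and the spectral formula $sp_{\mathcal{A}}(b)=\overline{\bigcup_{p} sp_{\mathcal{A}_{p}}(\pi_{p}(b))}$. Both are theorems of Phillips' inverse-limit theory and legitimately citable; since $sp_{\mathcal{A}_{p}}(\pi_{p}(b))\subseteq[1,\infty)$ uniformly in $p$, the closure of the union stays away from $0$, so the invertibility lift in (4) --- which you rightly flag as the delicate point --- works exactly as you claim, with $\pi_{p}(b^{-1})=\pi_{p}(b)^{-1}$ giving $b^{-1}\leq 1_{\mathcal{A}}$ coordinatewise.

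Two small repairs. First, in (5): after obtaining $a^{1/2}b^{-1}a^{1/2}=\bigl(a^{-1/2}ba^{-1/2}\bigr)^{-1}\leq 1_{\mathcal{A}}$ from (4), you must conjugate by $a^{-1/2}$, not $b^{-1/2}$, to reach $b^{-1}\leq a^{-1}$; as written that step does not produce the stated inequality. Second, your derivation of (1) through $\mathcal{A}_{p}$ is mildly circular: to know that $\ker p$ is a $*$-ideal and that $\mathcal{A}_{p}$ carries a well-defined isometric involution, you already need $p(a^{*})=p(a)$. Prove (1) first, directly from the seminorm axioms: $p(a)^{2}=p(a^{*}a)\leq p(a^{*})p(a)$ gives $p(a)\leq p(a^{*})$, and applying this to $a^{*}$ gives the reverse inequality; similarly (2) follows from $p(1_{\mathcal{A}})=p(1_{\mathcal{A}})^{2}$ together with $p(a)=p(a1_{\mathcal{A}})\leq p(a)p(1_{\mathcal{A}})$, which rules out $p(1_{\mathcal{A}})=0$ unless $p\equiv 0$. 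With these two points fixed, your proof is complete and is, if anything, more self-contained than the paper, which simply asserts the result.
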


\begin{definition}\cite{Philips}
A pre-Hilbert module over a pro-$C^{\ast}$-algebra $\mathcal{A}$ is a complex vector space $E$ which is also
a left $\mathcal{A}$-module compatible with the complex algebra structure, equipped with an $\mathcal{A}$-valued inner product $\langle .,.\rangle$ $E \times E \rightarrow \mathcal{A}$ which is  $\mathcal{A}$-linear in its first variable and satisfies the following conditions:
\begin{enumerate}
	\item[1)]
	$\langle \xi, \eta\rangle^{*}=\langle \eta, \xi\rangle 
	$;
	\item[2)]$\langle \xi, \xi\rangle \geq 0$;

	\item[3)] $\langle \xi, \xi\rangle=0 $  if and only if  $\xi=0$
\end{enumerate}
for all $\xi, \eta \in E .$ We say $E$ is a Hilbert $\mathcal{A}$-module (or Hilbert pro-$C^{\ast}$-module over $\mathcal{A}$). Assume that  $E$ is complete with respect to the topology determined by the family of seminorms
$$
\bar{p}_{E}(\xi)=\sqrt{p(\langle \xi, \xi\rangle)} \quad \xi \in E, p \in S(\mathcal{A}).
$$
\end{definition}
Let $\mathcal{A}$ be a pro-$C^{\ast}$-algebra and let $\mathcal{X}$ and $\mathcal{Y}$ be Hilbert $\mathcal{A}$-modules and assume that $I$ and $J$ are countable index sets.
A bounded $\mathcal{A}$-module map from
$\mathcal{X}$ to $\mathcal{Y}$ is called an operator from $\mathcal{X}$ to $\mathcal{Y}$. We denote the set of all operators from $\mathcal{X}$ to $\mathcal{Y}$ by  $Hom_{\mathcal{A}}(\mathcal{X}, \mathcal{Y})$.

\begin{definition}
An $ \mathcal{A}$-module map $T: \mathcal{X} \longrightarrow \mathcal{Y}$  is adjointable if there is a map $T^{\ast}: \mathcal{Y} \rightarrow \mathcal{X}$ such that $\langle T \xi, \eta\rangle=\left\langle \xi, T^{\ast} \eta\right\rangle$ for all $\xi \in \mathcal{X}, \eta \in \mathcal{Y}$, and is called bounded if for all $p \in S(\mathcal{A})$, there is $M_{p}>0$ such that $\bar{p}_{\mathcal{Y}}(T \xi) \leq M_{p} \bar{p}_{\mathcal{X}}(\xi)$ for all $\xi \in \mathcal{X}$.

We denote by $Hom_{\mathcal{A}}^{\ast}(\mathcal{X}, \mathcal{Y})$ the set of all adjointable operators from $\mathcal{X}$ to $\mathcal{Y}$ and let $Hom_{\mathcal{A}}^{\ast}(\mathcal{X})=Hom_{\mathcal{A}}^{\ast}(\mathcal{X}, \mathcal{X})$	 
\end{definition}

\begin{definition}
	Let $\mathcal{A}$ be a pro-$C^{\ast}$-algebra and $\mathcal{X}, \mathcal{Y}$ be two Hilbert $\mathcal{A}$-modules. An  operator $T: \mathcal{X} \rightarrow \mathcal{Y}$ is called uniformly bounded below if there exists $C>0$ such that for each $p \in S(\mathcal{A})$,
	\begin{equation*}
	\bar{p}_{\mathcal{Y}}(T \xi) \leqslant C \bar{p}_{\mathcal{X}}(\xi), \quad \text { for all } \xi \in \mathcal{X} 
	\end{equation*}
	and  is called uniformly bounded  above if there exists $C^{\prime}>0$ such that for each $p \in S(\mathcal{A})$,
	\begin{equation*}
	\bar{p}_{\mathcal{Y}}(T \xi) \geqslant C^{\prime}  \bar{p}_{\mathcal{X}}(\xi), \quad \text { for all } \xi \in \mathcal{X}.
	\end{equation*}
	Let	
	\begin{equation*}
	\|T\|_{\infty}=\inf \{M: M \text { is an upper bound for } T\},
	\end{equation*}
	\begin{equation*}
	\hat{p}_{\mathcal{Y}}(T)=\sup \left\{\bar{p}_{\mathcal{Y}}(T(x)): \xi \in \mathcal{X}, \quad \bar{p}_{\mathcal{X}}(\xi) \leqslant 1\right\}.
	\end{equation*}
	It is  clear to see that  $\hat{p}(T) \leqslant\|T\|_{\infty}$ for all $p \in S(\mathcal{A})$.	
\end{definition}

\begin{proposition} \cite{Azhini} \label{Prop2.6}
	Let $\mathcal{X}$ be a Hilbert module over pro-$C^{*}$-algebra $\mathcal{A}$ and $T$ be an invertible element in $Hom_{\mathcal{A}}^{\ast}(\mathcal{X})$, which is  uniformly bounded. Then for all $\xi \in \mathcal{X}$,
	$$
	\left\|T^{-1}\right\|_{\infty}^{-2}\langle \xi, \xi\rangle \leq\langle T \xi, T \xi \rangle \leq\|T\|_{\infty}^{2}\langle \xi, \xi\rangle
	.$$
\end{proposition}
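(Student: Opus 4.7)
The plan is to reduce the assertion to the single operator inequality
$\|T\|_{\infty}^{2}\,I_{\mathcal{X}}-T^{\ast}T\ge 0$ in $Hom_{\mathcal{A}}^{\ast}(\mathcal{X})$, which directly yields the upper estimate, and then to bootstrap from this to the lower estimate by applying the same inequality to $T^{-1}$.

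First, I would establish the upper bound. Since $T\in Hom_{\mathcal{A}}^{\ast}(\mathcal{X})$, the element $T^{\ast}T$ is a positive element of the pro-$C^{\ast}$-algebra $Hom_{\mathcal{A}}^{\ast}(\mathcal{X})$, and by the $C^{\ast}$-identity (applied to the seminorms generating the topology, then to the supremum $\|\cdot\|_{\infty}$) one has $\|T^{\ast}T\|_{\infty}=\|T\|_{\infty}^{2}$. Standard functional calculus in the pro-$C^{\ast}$-algebra setting — using that $T^{\ast}T$ is self-adjoint with spectrum contained in $[0,\|T^{\ast}T\|_{\infty}]$ — then gives $\|T\|_{\infty}^{2}\,I_{\mathcal{X}}-T^{\ast}T\ge 0$. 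Pairing this positivity with $\xi$ via the $\mathcal{A}$-valued inner product and using the adjoint relation yields
\[
\langle T\xi,T\xi\rangle=\langle T^{\ast}T\xi,\xi\rangle\le \|T\|_{\infty}^{2}\langle\xi,\xi\rangle.
\]

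Next, I would derive the lower bound by a substitution argument. Because $T$ is invertible in $Hom_{\mathcal{A}}^{\ast}(\mathcal{X})$, the operator $T^{-1}$ is itself an adjointable, uniformly bounded module map, and the upper bound just proved applies to it: for every $\eta\in\mathcal{X}$,
\[
\langle T^{-1}\eta,T^{-1}\eta\rangle\le \|T^{-1}\|_{\infty}^{2}\langle \eta,\eta\rangle.
\]
Setting $\eta=T\xi$ collapses the left side to $\langle\xi,\xi\rangle$, and multiplying through by $\|T^{-1}\|_{\infty}^{-2}$ (which is a positive scalar) gives $\|T^{-1}\|_{\infty}^{-2}\langle\xi,\xi\rangle\le\langle T\xi,T\xi\rangle$, as desired.

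The main obstacle is the first step, namely justifying the operator inequality $\|T\|_{\infty}^{2}\,I_{\mathcal{X}}-T^{\ast}T\ge 0$ directly in terms of the uniform norm $\|\cdot\|_{\infty}$ rather than in terms of each individual seminorm $\hat p$. In a genuine $C^{\ast}$-algebra this is routine, but here one must be careful that positivity in $Hom_{\mathcal{A}}^{\ast}(\mathcal{X})$ is compatible with the inverse limit structure of the pro-$C^{\ast}$-algebra and that $\|T\|_{\infty}$ really dominates the spectral radius of $T^{\ast}T$. Once that spectral control is in hand, the translation from operator positivity to the inner-product inequality is immediate from the defining property of the adjoint, and the rest of the argument is formal.
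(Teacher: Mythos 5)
The paper itself offers no proof of this proposition: it is imported verbatim from \cite{Azhini}, so the only meaningful comparison is with the standard argument in the Hilbert-module literature, and that is precisely the argument you give. Your two-step scheme --- deduce $\langle T\xi,T\xi\rangle\leq\|T\|_{\infty}^{2}\langle\xi,\xi\rangle$ from the operator inequality $\|T\|_{\infty}^{2}I-T^{\ast}T\geq 0$, then substitute $\eta=T\xi$ into the same estimate applied to $T^{-1}$ to obtain the lower bound --- is the classical Paschke--Lance argument transplanted to the pro-$C^{\ast}$ setting, and it is sound. The ``main obstacle'' you flag is resolved exactly as you suspect: the uniformly bounded elements $b\bigl(Hom_{\mathcal{A}}^{\ast}(\mathcal{X})\bigr)=\{S:\|S\|_{\infty}<\infty\}$ form a genuine $C^{\ast}$-algebra under $\|\cdot\|_{\infty}$ (a standard fact about pro-$C^{\ast}$-algebras; see \cite{Inoue,Philips}), so within it $T^{\ast}T$ is positive with norm $\|T\|_{\infty}^{2}$ by the $C^{\ast}$-identity, hence $\|T\|_{\infty}^{2}I-T^{\ast}T=S^{\ast}S$ for some $S$, and $\langle(\|T\|_{\infty}^{2}I-T^{\ast}T)\xi,\xi\rangle=\langle S\xi,S\xi\rangle\geq 0$, which is the upper bound. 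Positivity in the bounded part is consistent with positivity in the ambient pro-$C^{\ast}$-algebra, so no further inverse-limit compatibility needs to be checked; equivalently, one may verify positivity in each quotient $C^{\ast}$-algebra $Hom_{\mathcal{A}}^{\ast}(\mathcal{X})/\ker\hat{p}$, $p\in S(\mathcal{A})$.

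One assertion in your second step does need repair: the claim that ``because $T$ is invertible in $Hom_{\mathcal{A}}^{\ast}(\mathcal{X})$, the operator $T^{-1}$ is itself an adjointable, \emph{uniformly bounded} module map'' is false in general. Adjointability of $T^{-1}$ is fine, since $(T^{-1})^{\ast}=(T^{\ast})^{-1}$, but uniform boundedness of the inverse does not follow from that of $T$: in the pro-$C^{\ast}$-algebra $C(\mathbb{R})$ with the sup-seminorms over compact sets, $f(x)=e^{-x^{2}}$ is uniformly bounded and invertible, yet $f^{-1}(x)=e^{x^{2}}$ is not uniformly bounded. Uniform boundedness of $T^{-1}$ must therefore be read as part of the hypothesis --- which the statement tacitly does, since $\|T^{-1}\|_{\infty}$ appears in the conclusion (if $\|T^{-1}\|_{\infty}=\infty$ the lower bound degenerates to the trivial $0\leq\langle T\xi,T\xi\rangle$), and the paper makes the analogous assumption explicit elsewhere, e.g.\ ``such that $S_{W,v}$ and its inverse are uniformly bounded.'' With that reading, your substitution argument goes through verbatim and the proof is complete.
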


\begin{definition}
	A sequence $\{x_{i}\}_{i \in J}$ of elements in a Hilbert $\mathcal{A}$-module $\mathcal{X}$ is said to be a frame if there are real constants $C, D>0$ such that:
	\begin{equation}\label{equa3}
	C\langle \xi, \xi\rangle \leq \sum_{i \in J}\left\langle \xi, x_{i}\right\rangle\left\langle x_{i}, \xi\right\rangle \leq D\langle \xi, \xi\rangle
\end{equation}
	for all $\xi \in \mathcal{X}$. The constants $C$ and $D$ are called lower and upper frame bounds for the frame, respectively. If $C=D=\lambda$, the frame is called a $\lambda$-tight frame. If $C=D=1$, the frame is called a Parseval frame. We say that $\{x_{i}\}_{ i \in J}$ is a frame sequence if it is a frame for the closure of $\mathcal{A}$-linear hull of $\{x_{i}\}_{ i \in J}$.  If, in (\ref{equa3}), we only require to have the upper bound, then $\{x_{i}\}_{i \in J}$ is called a Bessel sequence with Bessel bound $D$.
\end{definition}

\begin{definition} \cite{Azhini}
	Let $\{v_{i}\}_{i \in J}$ be a family of weights in $A$, i.e., each $v_{i}$ is a positive invertible element from the center of $\mathcal{A}$, and let $\left\{M_{i}: i \in J \right\}$ be a family of orthogonally complemented submodules of $\mathcal{X} .$ Then $\left\{\left(M_{i}, v_{i}\right): i \in J\right\}$ is a fusion frame if there exist real constants $C, D>$ 0 such that:
	\begin{equation}\label{equa4}
	C\langle \xi, \xi\rangle \leq \sum_{i \in J} v_{i}^{2}\left\langle P_{M_{i}}(\xi), P_{M_{i}}(\xi)\right\rangle \leq D\langle \xi, \xi\rangle
	\end{equation}
	for all $\xi \in \mathcal{X}$. We call $C$ and $D$ the lower and upper bounds of the fusion frame. If $C=D=\lambda$, then  the family $\left\{\left(M_{i}, v_{i}\right): i \in J \right\}$ is called a $\lambda$-tight fusion frame and if $C=D=1$, then  it is called a Parseval fusion frame. If, in (\ref{equa4}), we only have the upper bound, then $\left\{\left(M_{i}, v_{i}\right): i \in J \right\}$ is called a Bessel fusion sequence with Bessel bound $D$.
\end{definition}
 See \cite{kna, lil} for more infornation on fusion frames and applications. 

\begin{lemma}\cite{Alizadeh}\label{lem1.9}
		 Let $\mathcal{A}$ be a locally $C^{\ast}$-algebra, $\mathcal{X}$ be  a Hilbert $\mathcal{A}$-module and $T $ be a  bounded operator in  $Hom_{\mathcal{A}}^{\ast}(\mathcal{X}) $ such that $T^{*}=T .$ The following statements are mutually equivalent:
		 \begin{enumerate}	 
	\item $T$ is surjective.
	\item  There are $m, M>0$ such that $m \bar{p}_{\mathcal{X}}(\xi) \leq \bar{p}_{\mathcal{X}}(T \xi) \leq M \bar{p}_{\mathcal{X}}(\xi)$ for all
	$p \in S(\mathcal{A})$ and $\xi \in \mathcal{X}$.
	\item  There are $m^{\prime}, M^{\prime}>0$ such that $m^{\prime}\langle \xi, \xi \rangle \leq\langle T \xi, T \xi \rangle \leq M^{\prime}\langle \xi, \xi \rangle$ for all
	$\xi \in \mathcal{X}.$
\end{enumerate}
	\end{lemma}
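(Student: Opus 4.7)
The plan is to prove the equivalences cyclically as $(3) \Rightarrow (2) \Rightarrow (1) \Rightarrow (3)$, which seems to exploit the available tools most economically.

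For $(3) \Rightarrow (2)$, I would apply a continuous $C^{\ast}$-seminorm $p$ to the inner product inequality. Since $\langle\xi,\xi\rangle$ and $\langle T\xi,T\xi\rangle$ both lie in $\mathcal{A}^{+}$ and $p$ is monotone on positives by Proposition 2.2(3), the chain of inequalities becomes $m'\,p(\langle\xi,\xi\rangle) \le p(\langle T\xi,T\xi\rangle) \le M'\,p(\langle\xi,\xi\rangle)$. Taking square roots and invoking $\bar p_{\mathcal{X}}(\eta)^{2}=p(\langle\eta,\eta\rangle)$ yields $(2)$ with $m=\sqrt{m'}$ and $M=\sqrt{M'}$.

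For $(2) \Rightarrow (1)$, the lower bound makes $T$ injective, since $T\xi=0$ forces $\bar p_{\mathcal{X}}(\xi)=0$ for every $p\in S(\mathcal{A})$, hence $\xi=0$. The same bound also shows that $T$ has closed range: if $\{T\xi_{n}\}$ is Cauchy in $\mathcal{X}$, then $\bar p_{\mathcal{X}}(\xi_{n}-\xi_{k}) \le m^{-1}\bar p_{\mathcal{X}}(T\xi_{n}-T\xi_{k})$ uniformly in $p$, so $\{\xi_{n}\}$ is Cauchy and its limit $\xi$ satisfies $T\xi=\lim T\xi_{n}$. The key remaining step is that an adjointable operator on a Hilbert pro-$C^{\ast}$-module with closed range has an orthogonally complemented image, which gives $\mathcal{X}=\operatorname{Im} T \oplus (\operatorname{Im} T)^{\perp}$. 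Using $T^{\ast}=T$ and injectivity, $(\operatorname{Im} T)^{\perp}=\ker T^{\ast}=\ker T=\{0\}$, so $\operatorname{Im} T=\mathcal{X}$.

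For $(1) \Rightarrow (3)$, note that self-adjointness plus surjectivity forces injectivity, since any $\eta\in\ker T$ would satisfy $\langle\eta,T\xi\rangle=\langle T\eta,\xi\rangle=0$ for all $\xi$, and surjectivity gives $\eta=0$. Thus $T$ is bijective. Because $\mathcal{X}$ is complete (hence Fr\'echet) and $T$ is continuous and $\mathcal{A}$-linear, the open mapping theorem provides a continuous inverse $T^{-1}\in Hom_{\mathcal{A}}^{\ast}(\mathcal{X})$ that inherits uniform boundedness from $T$. Proposition \ref{Prop2.6} applied to $T$ then yields directly
\[
\|T^{-1}\|_{\infty}^{-2}\langle\xi,\xi\rangle \le \langle T\xi,T\xi\rangle \le \|T\|_{\infty}^{2}\langle\xi,\xi\rangle,
\]
so $(3)$ holds with $m'=\|T^{-1}\|_{\infty}^{-2}$ and $M'=\|T\|_{\infty}^{2}$.

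The main obstacle is the orthogonal-complementation step inside $(2)\Rightarrow(1)$: the classical $C^{\ast}$-module theorem (closed-range adjointable operators have complemented image) must be transferred to the pro-$C^{\ast}$ setting, and one also has to be slightly careful that the bound $m$ in $(2)$ is genuinely uniform across all seminorms $p$ so that the Cauchy-lifting argument produces a limit in the completion $\mathcal{X}$ rather than only in each local Hilbert $C^{\ast}$-module quotient. All other steps are direct applications of the previously recorded Proposition 2.2 and Proposition \ref{Prop2.6}.
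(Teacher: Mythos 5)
The paper never proves this lemma --- it is imported verbatim from \cite{Alizadeh} with no in-text argument --- so there is nothing to match your proposal against; judged on its own, your proposal has a genuine gap in the step $(1) \Rightarrow (3)$, and the gap is precisely the heart of the lemma. First, the open mapping theorem is not available: for a general locally $C^{\ast}$-algebra the family $S(\mathcal{A})$ may be uncountable, so $\mathcal{X}$ is a complete locally convex space but not Fr\'echet, and the classical theorem does not apply. Second, and more fundamentally, even if you obtain a continuous inverse (say in the $\sigma$-$C^{\ast}$ case), continuity only yields a constant $M_{p}$ \emph{for each} seminorm $p$, exactly as in Definition 2.4 of the paper; Proposition \ref{Prop2.6} by contrast requires $\|T\|_{\infty}<\infty$ and $\|T^{-1}\|_{\infty}<\infty$, i.e.\ a \emph{single} bound uniform over all $p \in S(\mathcal{A})$. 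Your phrase ``inherits uniform boundedness from $T$'' is unsupported --- producing uniform constants $m, M$ is precisely what statements $(2)$ and $(3)$ assert, so the argument assumes what it must prove. Nor can this be patched by soft topological reasoning: on the $\sigma$-$C^{\ast}$-algebra $\mathbb{C}^{\mathbb{N}}$ with seminorms $p_{n}(x)=\sup_{k \leq n}|x_{k}|$, viewed as a Hilbert module over itself, multiplication by $(1/k)_{k}$ is adjointable, self-adjoint, surjective and even uniformly bounded above, yet $\bar{p}_{n}(Te_{n})=1/n$ while $\bar{p}_{n}(e_{n})=1$, so no uniform lower bound $m$ exists --- only $p$-dependent ones. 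Hence any correct proof of $(1)\Rightarrow(2)$ must exploit more than continuity of $T^{-1}$, and you should check against \cite{Alizadeh} exactly what ``bounded'' means there and whether the constants are allowed to depend on $p$.

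The other two legs are in better shape. Your $(3) \Rightarrow (2)$ via monotonicity of $p$ on $\mathcal{A}^{+}$ is correct as written. In $(2) \Rightarrow (1)$, the injectivity and closed-range computations are sound, and you are right that the uniformity of $m$ across seminorms is what makes the Cauchy argument converge in $\mathcal{X}$ itself; but the orthogonal-complementation step you flag is a real dependency, not a formality --- closed submodules of Hilbert modules need not be complemented, so you need the pro-$C^{\ast}$ analogue of Lance's closed-range theorem for adjointable operators. That analogue does exist in the literature (e.g.\ in Joi\c{t}a's work on Hilbert modules over locally $C^{\ast}$-algebras), so this is a citable, fixable IOU rather than a fatal flaw; the $(1)\Rightarrow(3)$ circularity is the step that needs genuine new work.
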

Similar to $C^{\ast}$-algebra,  each  $\ast$-homomorphism between two pro-$C^{\ast}$-algebra is increasing.

\begin{lemma}\label{2.7} 
	If $\varphi:\mathcal{A}\rightarrow\mathcal{B}$ is a $\ast$-homomorphism between pro-$\mathcal{C}^{\ast}$-algebras, then $\varphi$ is increasing, that is, if $a\leq b$, then $\varphi(a)\leq\varphi(b)$.
\end{lemma}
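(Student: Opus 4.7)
The plan is to mimic the familiar $C^{\ast}$-algebra argument, taking care that the characterization of positivity through squares carries over to the pro-$C^{\ast}$ setting. The key structural fact to invoke is that in any pro-$C^{\ast}$-algebra $\mathcal{A}$, an element $x$ is positive if and only if $x = c^{\ast}c$ for some $c \in \mathcal{A}$; this is classical and is proved by realizing $\mathcal{A}$ as the inverse limit $\varprojlim_{p} \mathcal{A}_{p}$ of the $C^{\ast}$-algebras $\mathcal{A}_{p}$ associated with the seminorms $p \in S(\mathcal{A})$ and applying the standard $C^{\ast}$-characterization levelwise (or, equivalently, by functional calculus with the square root function on $sp(x) \subseteq [0,\infty)$).

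Granting this, the proof itself is short. First, assume $a \leq b$, which by definition of the order on a pro-$C^{\ast}$-algebra means $b - a \in \mathcal{A}^{+}$. By the characterization above, we may pick $c \in \mathcal{A}$ with $b - a = c^{\ast}c$. Then applying the $\ast$-homomorphism $\varphi$ and using that it is linear and intertwines the involutions,
\[
\varphi(b) - \varphi(a) \;=\; \varphi(b - a) \;=\; \varphi(c^{\ast}c) \;=\; \varphi(c)^{\ast}\varphi(c),
\]
and this element is of the form $y^{\ast}y$ in $\mathcal{B}$, hence positive by the same characterization applied in $\mathcal{B}$. Therefore $\varphi(b) - \varphi(a) \geq 0$, i.e.\ $\varphi(a) \leq \varphi(b)$.

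The only nontrivial step is the positivity characterization $x \geq 0 \iff x = c^{\ast}c$ in a pro-$C^{\ast}$-algebra; once that is in hand, no continuity hypothesis on $\varphi$ is needed, since the argument proceeds purely algebraically through the identity $\varphi(c^{\ast}c) = \varphi(c)^{\ast}\varphi(c)$. Thus the main conceptual obstacle is merely citing (or briefly noting) that the square-root/positive-square characterization, a standard fact for $C^{\ast}$-algebras, extends to the pro-$C^{\ast}$ framework via the inverse-limit representation.
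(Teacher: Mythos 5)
Your proof is correct, and it supplies precisely the standard argument the paper leaves implicit---its own proof consists only of the words ``The proof is easy'': write $b-a=c^{\ast}c$ via the positive-square characterization of $\mathcal{A}^{+}$, apply $\varphi$, and conclude $\varphi(b)-\varphi(a)=\varphi(c)^{\ast}\varphi(c)\geq 0$. Your justification of the one nontrivial ingredient---that $x\geq 0$ iff $x=c^{\ast}c$ carries over to pro-$C^{\ast}$-algebras via the inverse-limit realization over the $C^{\ast}$-algebras $\mathcal{A}_{p}$, $p\in S(\mathcal{A})$ (or functional calculus)---is sound, and you are right that no continuity hypothesis on $\varphi$ is needed since the argument is purely algebraic.
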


\begin{proof}
The proof is easy.
\end{proof}

\section{Operator frame for $ Hom_{\mathcal{A}}^{\ast}(\mathcal{X})$}

\begin{definition}
	A family of adjointable operators $\{T_{i}\}_{i\in J}$ on a Hilbert $\mathcal{A}$-module $\mathcal{X}$ over a unital pro-$C^{\ast}$-algebra is said to be an operator frame for $Hom_{\mathcal{A}}^{\ast}(\mathcal{X})$ if there exist   constants $A, B > 0$ such that 
	\begin{equation}\label{eq3}
		A\langle \xi,\xi\rangle\leq\sum_{i\in J}\langle T_{i}\xi,T_{i}\xi\rangle\leq B\langle \xi,\xi\rangle, \forall \xi\in\mathcal{X}.
	\end{equation}
	The numbers $A$ and $B$ are called lower and upper bounds of the operator frame, respectively. If $A=B=\lambda$, then the operator frame is $\lambda$-tight. If $A = B = 1$, then it is called a normalized tight operator frame or a Parseval operator frame. If only upper inequality of \eqref{eq3} holds, then $\{T_{i}\}_{i\in J}$ is called an operator Bessel sequence for $Hom_{\mathcal{A}}^{\ast}(\mathcal{X})$.
	
	If the sum in the middle of \eqref{eq3} is convergent in norm, then the operator frame is called standard. 
\end{definition}

\begin{example}
	Let $\mathcal{A}$ be a Hilbert pro-$C^{\ast}$-module over itself with the inner product $\langle a,b\rangle=ab^{\ast}$.
	Let $\{x_{i}\}_{i\in J}$ be a frame for $\mathcal{A}$ with bounds $A$ and $B$, respectively. For each $i\in J$, we define $T_{i}:\mathcal{A}\to\mathcal{A}$ by $T_{i}\xi=\langle \xi,x_{i}\rangle,\;\; \forall \xi\in\mathcal{A}$. Then $T_{i}$ is adjointable and $T_{i}^{\ast}a=ax_{i}$ for all $a\in\mathcal{A}$. We have 
	\begin{equation*}
		A\langle \xi,\xi\rangle\leq\sum_{i\in J}\langle \xi,x_{i}\rangle\langle x_{i},\xi\rangle\leq B\langle \xi,\xi\rangle, \forall \xi\in\mathcal{A}
	\end{equation*}
	and so 
	\begin{equation*}
		A\langle \xi,\xi\rangle\leq\sum_{i\in J}\langle T_{i}\xi,T_{i}\xi\rangle\leq B\langle \xi,\xi\rangle, \forall \xi\in\mathcal{A}.
	\end{equation*}
Thus  $\{T_{i}\}_{i\in J}$ is an operator frame in $\mathcal{A}$ with bounds $A$ and $B$, respectively.
\end{example}

\begin{example}
	Let $\{W_{i}\}_{i\in J}$ be a frame of submodules with respect to $ \{v_{i}\}_{i\in J} $ for $\mathcal{X}$. Put $T_{i}=v_{i}\pi_{W_{i}}, \forall i\in J$. Then we get a sequence of operators $\{T_{i}\}_{i\in J}$ and  there exist constants $A, B >0$ such that
	\begin{displaymath}
		A\langle \xi,\xi\rangle\leq\sum_{i\in J}v_{i}^{2}\langle P_{W_{i}}\xi,P_{W_{i}}\xi\rangle\leq B\langle \xi,\xi\rangle, \forall \xi\in\mathcal{X}.
	\end{displaymath}
	So we have
	\begin{displaymath}
		A\langle \xi,\xi\rangle\leq\sum_{i\in J}\langle T_{i}\xi,T_{i}\xi\rangle\leq B\langle \xi,\xi\rangle, \forall \xi\in\mathcal{X}.
	\end{displaymath}
	Thus the sequence $\{T_{i}\}_{i\in J}$ becomes an operator frame for $\mathcal{X}$.
\end{example}

With this example,  a frame of submodules can be viewed as a special case of operator frames.

\begin{theorem}
	Let $\{T_{i}\}_{i \in J}$ be an operator frame for $Hom_{\mathcal{A}}^{\ast}(\mathcal{X})$ with bounds $A$ and $B$. Let $Q$ be a bounded element in $Hom_{\mathcal{A}}(\mathcal{X})$ such that $Q= Q^{\ast}.$ If $Q$ is surjective, then $\{T_{i}Q\}_{i \in J}$ is an operator frame for $Hom_{\mathcal{A}}^{\ast}(\mathcal{X}).$
\end{theorem}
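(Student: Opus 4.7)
The plan is to reduce the claim to applying the operator-frame inequality of $\{T_i\}$ at the shifted argument $Q\xi$, and then to sandwich $\langle Q\xi,Q\xi\rangle$ between constant multiples of $\langle \xi,\xi\rangle$. First I would note that since $Q$ is self-adjoint, it already lies in $Hom_{\mathcal{A}}^{\ast}(\mathcal{X})$, and each composition $T_i Q$ is adjointable with $(T_i Q)^{\ast}=QT_i^{\ast}$, so the candidate family $\{T_iQ\}_{i\in J}$ genuinely sits inside the class of adjointable operators for which Definition 3.1 is formulated.

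Next, the key observation is that the hypotheses on $Q$, namely self-adjointness and surjectivity, are precisely the setup of Lemma \ref{lem1.9}. Applying the implication $(1)\Rightarrow(3)$ of that lemma, I obtain constants $m',M'>0$ with
\begin{equation*}
m'\langle \xi,\xi\rangle \;\leq\; \langle Q\xi,Q\xi\rangle \;\leq\; M'\langle \xi,\xi\rangle, \qquad \forall\,\xi\in\mathcal{X}.
\end{equation*}
Then, applying the operator-frame inequality \eqref{eq3} of $\{T_i\}$ to the vector $Q\xi\in\mathcal{X}$ produces
\begin{equation*}
A\langle Q\xi,Q\xi\rangle \;\leq\; \sum_{i\in J}\langle T_iQ\xi,T_iQ\xi\rangle \;\leq\; B\langle Q\xi,Q\xi\rangle.
\end{equation*}
Chaining the two double inequalities (and using that multiplication by the positive scalars $A,B$ preserves order in $\mathcal{A}^+$) yields
\begin{equation*}
Am'\langle \xi,\xi\rangle \;\leq\; \sum_{i\in J}\langle T_iQ\xi,T_iQ\xi\rangle \;\leq\; BM'\langle \xi,\xi\rangle,
\end{equation*}
which exhibits $\{T_iQ\}_{i\in J}$ as an operator frame for $Hom_{\mathcal{A}}^{\ast}(\mathcal{X})$ with bounds $Am'$ and $BM'$.

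The main conceptual step, and the only nontrivial one, is recognising that surjectivity alone (rather than invertibility, as in Proposition \ref{Prop2.6}) suffices to bound $\langle Q\xi,Q\xi\rangle$ from below by a scalar multiple of $\langle \xi,\xi\rangle$; this is exactly what Lemma \ref{lem1.9} provides for self-adjoint adjointable operators, and it is the place where the hypothesis $Q=Q^{\ast}$ is used in an essential way. The remaining manipulations are routine monotonicity arguments in the order structure of $\mathcal{A}^+$.
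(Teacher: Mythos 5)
Your proposal is correct and follows essentially the same route as the paper: both invoke Lemma \ref{lem1.9} to sandwich $\langle Q\xi,Q\xi\rangle$ between scalar multiples of $\langle \xi,\xi\rangle$, then substitute $Q\xi$ into the frame inequality for $\{T_i\}_{i\in J}$ to obtain the bounds $Am'$ and $BM'$. Your version merely makes explicit two details the paper leaves implicit, namely the adjointability of $T_iQ$ and the order-preservation under scalar multiplication, which is a welcome addition but not a different argument.
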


	\begin{proof}
	Since 	$Q$ is surjective,  by Lemma \ref{lem1.9},  there are $m, M>0$ such that $$m \langle \xi, \xi\rangle \leq\langle Q \xi, Q \xi \rangle \leq M \langle \xi, \xi\rangle \quad  \text{for all} \quad
		\xi \in \mathcal{X}.$$ 
		Then 
		\begin{displaymath}
		Am\langle \xi,\xi\rangle\leq\sum_{i\in J}\langle T_{i}Q\xi,T_{i}Q\xi\rangle\leq B M\langle \xi,\xi\rangle, \forall \xi\in\mathcal{X}.
		\end{displaymath}
		Hence $\{T_{i}Q\}_{i \in J}$ is an operator frame for $Hom_{\mathcal{A}}^{\ast}(\mathcal{X}).$		
	\end{proof}

Let $\{T_{i}\}_{i\in J}$ be an operator frame for $Hom_{\mathcal{A}}^{\ast}(\mathcal{X})$. Define an operator 
$$R:\mathcal{X}\rightarrow l^{2}(\mathcal{X})\;\;by\;\; Rx=\{T_{i}x\}_{i\in J}, \forall x\in\mathcal{X}.$$
The operator $R$ is called an  analysis operator of the operator frame $ J \{T_{i}\}_{i\in J} $.\\
The adjoint $R^{\ast}(\{x_{i}\}_{i\in J}):l^{2}(\mathcal{X})\rightarrow\mathcal{X}$ of the  analysis operator $R$  is defined by $$R^{\ast}(\{x_{i}\}_{i\in J})=\sum_{i\in J}T_{i}^{\ast}x_{i}, \forall\{x_{i}\}_{i\in J}\in l^{2}(\mathcal{X}).$$
The operator $R^{\ast}$ is called the synthesis operator of the operator frame $ \{T_{i}\}_{i\in J} $.\\
By composing $R$ and $R^{\ast}$, the frame operator $S:\mathcal{X}\rightarrow\mathcal{X}$ for the operator frame is given by $$	S(x)=R^{\ast}Rx=\sum_{i\in  J}T_{i}^{\ast}T_{i}x.$$

\begin{theorem}
	Let $\{T_{i}\}_{i\in J}$ be a family of adjointable operators  on a Hilbert $\mathcal{A}$-module $\mathcal{X}$. Then $\{T_{i}\}_{i\in J}$ is an operator frame for $Hom_{\mathcal{A}}^{\ast}(\mathcal{X})$ if and only if there exist positive constants $A, B > 0$ such that
	\begin{equation}\label{equa2}
		A\bar{p}_{\mathcal{X}}(\xi)^{2}\leq p(\sum_{i\in J}\langle T_{i}\xi,T_{i}\xi\rangle) \leq B \bar{p}_{\mathcal{X}}(\xi)^{2}, \forall \xi\in\mathcal{X}.
	\end{equation} 
\end{theorem}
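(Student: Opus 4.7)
My plan rests on the identity $\bar{p}_{\mathcal{X}}(\xi)^{2} = p(\langle \xi, \xi \rangle)$ that defines the Hilbert pro-$C^{\ast}$-module seminorm, together with the monotonicity of each $p \in S(\mathcal{A})$ on the positive cone $\mathcal{A}^{+}$ (Proposition 2.2(3)) and its positive homogeneity. These facts convert the $\mathcal{A}$-valued inequality \eqref{eq3} into the scalar inequality \eqref{equa2} one seminorm at a time.

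For the forward direction, I fix $p \in S(\mathcal{A})$ and $\xi \in \mathcal{X}$ and apply $p$ term-by-term to the chain
\begin{equation*}
A\langle \xi, \xi\rangle \leq \sum_{i \in J} \langle T_i \xi, T_i \xi\rangle \leq B\langle \xi, \xi\rangle.
\end{equation*}
Monotonicity and homogeneity of $p$ on $\mathcal{A}^{+}$ immediately yield \eqref{equa2} with the same constants $A, B$. For the converse, the idea is to package the data through the frame operator $S\xi := \sum_i T_i^{\ast} T_i \xi$, which is positive and selfadjoint. Its positive square root $S^{1/2}$ satisfies $\langle S^{1/2}\xi, S^{1/2}\xi \rangle = \langle S\xi, \xi\rangle = \sum_i \langle T_i \xi, T_i \xi\rangle$, so the hypothesis \eqref{equa2} transcribes into
\begin{equation*}
\sqrt{A}\,\bar{p}_{\mathcal{X}}(\xi) \leq \bar{p}_{\mathcal{X}}(S^{1/2}\xi) \leq \sqrt{B}\,\bar{p}_{\mathcal{X}}(\xi) \qquad (p \in S(\mathcal{A}),\ \xi \in \mathcal{X}).
\end{equation*}
Since $S^{1/2}$ is selfadjoint, Lemma \ref{lem1.9} then upgrades this scalar bound to an $\mathcal{A}$-valued bound $m'\langle \xi, \xi\rangle \leq \langle S^{1/2}\xi, S^{1/2}\xi\rangle \leq M'\langle \xi, \xi\rangle$ for some $m', M' > 0$, which is exactly \eqref{eq3} (with frame bounds $m'$ and $M'$).

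The hard part will be the converse direction, since one cannot simply invert the forward argument: in a $C^{\ast}$-algebra, $p(a) \leq p(b)$ for positive $a, b$ does not entail $a \leq b$. The way around this is to locate a single selfadjoint operator whose scalar and $\mathcal{A}$-valued bounds simultaneously encode \eqref{equa2} and \eqref{eq3}; Lemma \ref{lem1.9} is designed precisely to trade these two kinds of bounds against each other, and $S^{1/2}$ is that operator. A subsidiary technical point is verifying that $S$ is a bona fide element of $Hom_{\mathcal{A}}^{\ast}(\mathcal{X})$ (the upper bound in \eqref{equa2} provides the required uniform boundedness) and that its positive square root exists there, after which the argument closes.
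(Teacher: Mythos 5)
Your proof is correct, and its forward direction coincides with the paper's (apply $p$, using monotonicity on $\mathcal{A}^{+}$ and positive homogeneity). In the converse you follow the same skeleton as the paper — pass to the frame operator $S$, take its positive square root, and use the identity $\langle S^{1/2}\xi,S^{1/2}\xi\rangle=\sum_{i}\langle T_{i}\xi,T_{i}\xi\rangle$ to read \eqref{equa2} as a two-sided scalar bound on $S^{1/2}$ — but you close the argument with a different key result: the equivalence (2)$\Leftrightarrow$(3) of Lemma \ref{lem1.9} for the bounded self-adjoint operator $S^{1/2}$, whereas the paper invokes Proposition \ref{Prop2.6}, which requires $S^{1/2}$ to be \emph{invertible} and uniformly bounded. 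This difference is to your advantage: the paper simply asserts that $S$ is ``positive, self-adjoint and invertible'' in a context where only \eqref{equa2} is available, and invertibility is not justified there (the earlier theorem establishing $AI\leq S\leq BI$ presupposes the frame inequality, which is exactly what is being proved); your route needs only self-adjointness and boundedness of $S^{1/2}$, since the scalar bound $\sqrt{A}\,\bar{p}_{\mathcal{X}}(\xi)\leq\bar{p}_{\mathcal{X}}(S^{1/2}\xi)\leq\sqrt{B}\,\bar{p}_{\mathcal{X}}(\xi)$ is precisely condition (2) of Lemma \ref{lem1.9}, and condition (3) then delivers the $\mathcal{A}$-valued frame inequality directly. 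The trade-off is that the paper's version yields the explicit bounds $\|S^{-1/2}\|_{\infty}^{-2}$ and $\|S^{1/2}\|_{\infty}^{2}$, while yours produces unspecified constants $m',M'$; also note that your approach, like the paper's, still tacitly assumes the sum defining $S$ converges and that the positive square root exists in $Hom_{\mathcal{A}}^{\ast}(\mathcal{X})$ — you flag this honestly, and it is a shared gap rather than one you introduce.
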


\begin{proof}
	Suppose that $\{T_{i}\}_{i\in J}$ is an operator frame for $Hom_{\mathcal{A}}^{\ast}(\mathcal{X})$.  Combined with the definition of an operator frame, we know that \eqref{equa2} holds.

	Now suppose that \eqref{equa2} holds. We know that the frame operator $S$ is positive, self-adjoint and invertible, and hence $$\langle S^{\frac{1}{2}}x,S^{\frac{1}{2}}\xi\rangle=\langle S\xi,\xi\rangle=\sum_{i\in J}\langle T_{i}\xi,T_{i}\xi\rangle.$$
	So we have $A p(\langle \xi,\xi\rangle)\leq p(\langle S^{\frac{1}{2}}\xi,S^{\frac{1}{2}}\xi\rangle)\leq B p(\langle \xi,\xi\rangle)$ for all $\xi\in\mathcal{X}.$  Thus  $ A\bar{p}_{\mathcal{X}}(\xi)^{2}\leq \bar{p}_{\mathcal{Y}}( S^{\frac{1}{2}}\xi)^{2}\leq B \bar{p}_{\mathcal{X}}(\xi)^{2}.$  According to Proposition \ref{Prop2.6}, we have
	$$\|S^{- \frac{1}{2}}\|_{\infty}^{-2}\langle \xi, \xi\rangle\leq\langle S^{\frac{1}{2}}\xi,S^{\frac{1}{2}}\xi\rangle=\sum_{i\in J}\langle T_{i}\xi,T_{i}\xi\rangle\leq \|S^{ \frac{1}{2}}\|_{\infty}^{2}\langle \xi,\xi\rangle,$$
	which implies that $\{T_{i}\}_{i\in J}$ is an operator frame for $Hom_{\mathcal{A}}^{\ast}(\mathcal{X})$.
\end{proof}

\begin{theorem}
	Assume that $S$ is the frame operator of an operator frame $T=\{T_{i}\}_{i\in J}$ for $Hom_{\mathcal{A}}^{\ast}(\mathcal{X})$ with bounds $A$ and $B$. Then $S$ is positive, self-adjoint and invertible. Moreover, we have $AI\leq S \leq BI$ and the reconstruction formula
	\begin{equation}
		\xi=\sum_{i\in J}S^{-1}T_{i}^{\ast}T_{i}\xi, \forall \xi\in\mathcal{X}.
	\end{equation}  
\end{theorem}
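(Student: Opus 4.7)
The plan is to verify the four claimed properties of $S$ in sequence, each following from a short manipulation of the frame inequality together with the tools already set up in the preliminaries.

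First I would establish that $S$ is self-adjoint and positive by working at the level of inner products rather than norms, since this is the cleanest way to handle convergence in the pro-$C^{\ast}$-module setting. For any $\xi,\eta\in\mathcal{X}$, the identity
\[
\langle S\xi,\eta\rangle=\sum_{i\in J}\langle T_{i}^{\ast}T_{i}\xi,\eta\rangle=\sum_{i\in J}\langle T_{i}\xi,T_{i}\eta\rangle=\langle\xi,S\eta\rangle
\]
yields $S^{\ast}=S$, while setting $\eta=\xi$ gives $\langle S\xi,\xi\rangle=\sum_{i\in J}\langle T_{i}\xi,T_{i}\xi\rangle\geq 0$, so $S\geq 0$.

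Next I would read off the operator inequality $AI\leq S\leq BI$ directly from the defining inequality \eqref{eq3}: since $\langle S\xi,\xi\rangle$ is exactly the middle term, the inequality holds pointwise in the order of $\mathcal{A}$, which by definition is the order on $Hom_{\mathcal{A}}^{\ast}(\mathcal{X})$.

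The step I expect to be the main obstacle is invertibility, because in the pro-$C^{\ast}$ setting one cannot simply invoke the usual Neumann series argument for a Banach space. My plan is to route through Lemma \ref{lem1.9}. Since $S$ is self-adjoint with $AI\leq S\leq BI$ and $A>0$, squaring in the positive cone gives $A^{2}I\leq S^{2}\leq B^{2}I$, so
\[
A^{2}\langle\xi,\xi\rangle\leq\langle S\xi,S\xi\rangle=\langle S^{2}\xi,\xi\rangle\leq B^{2}\langle\xi,\xi\rangle,\quad\forall\xi\in\mathcal{X}.
\]
Condition (3) of Lemma \ref{lem1.9} is then met, so $S$ is surjective; injectivity follows from $A\langle\xi,\xi\rangle\leq\langle S\xi,\xi\rangle$ (if $S\xi=0$ then $\langle\xi,\xi\rangle=0$, hence $\xi=0$). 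Therefore $S^{-1}$ exists in $Hom_{\mathcal{A}}^{\ast}(\mathcal{X})$.

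Finally, the reconstruction formula is immediate: applying $S^{-1}$ to $S\xi=\sum_{i\in J}T_{i}^{\ast}T_{i}\xi$ and using continuity/linearity of $S^{-1}$ to pass it under the sum yields $\xi=\sum_{i\in J}S^{-1}T_{i}^{\ast}T_{i}\xi$ for every $\xi\in\mathcal{X}$.
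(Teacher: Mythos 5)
Your proposal is correct, and its skeleton matches the paper's: read $AI\leq S\leq BI$ off the frame inequality via $\langle S\xi,\xi\rangle=\sum_{i\in J}\langle T_{i}\xi,T_{i}\xi\rangle$, then apply $S^{-1}$ to $S\xi=\sum_{i\in J}T_{i}^{\ast}T_{i}\xi$ to get the reconstruction formula. Where you genuinely diverge is the invertibility step, and your version is the stronger one. The paper simply asserts that $AI\leq S\leq BI$ ``implies that $S$ is invertible'' with no justification, which is exactly the point that needs care in the pro-$C^{\ast}$ setting, where one cannot fall back on a Banach-algebra Neumann-series argument. Your detour through Lemma \ref{lem1.9} repairs this: the lemma applies since $S$ is bounded (the upper frame bound forces this) and self-adjoint, the two-sided estimate $A^{2}\langle\xi,\xi\rangle\leq\langle S\xi,S\xi\rangle\leq B^{2}\langle\xi,\xi\rangle$ gives surjectivity by condition (3), injectivity follows from the lower bound $A\langle\xi,\xi\rangle\leq\langle S\xi,\xi\rangle$ as you argue, and condition (2) additionally yields boundedness of $S^{-1}$, so $S^{-1}\in Hom_{\mathcal{A}}^{\ast}(\mathcal{X})$. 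You also fill in the self-adjointness and positivity computation that the paper dismisses as ``clear.'' In short, your write-up is more complete than the paper's precisely at the points where the paper is thin, at the cost of invoking one extra lemma.

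One line in your argument deserves an explicit justification: ``squaring in the positive cone'' is not valid in general, since $0\leq a\leq b$ does not imply $a^{2}\leq b^{2}$ in a $C^{\ast}$-algebra ($t\mapsto t^{2}$ is not operator monotone). It does hold in your situation only because scalar multiples of the identity commute with $S$: for instance $S^{2}-A^{2}I=(S-AI)(S+AI)$ is a product of two commuting positive operators, hence positive, and similarly $B^{2}I-S^{2}=(BI-S)(BI+S)\geq 0$. Adding that one sentence makes the step airtight; as written, a reader could reasonably object that you used a false general principle.
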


\begin{proof}
	It is clear that $S$ is positive and self-adjoint. For any $\xi\in\mathcal{X}$, since $T=\{T_{i}\}_{i\in J}$ is an operator frame with bounds $A, B$, we have
	\begin{displaymath}
		\langle A\xi,\xi\rangle=A\langle \xi,\xi\rangle\leq\sum_{i\in J}\langle T_{i}\xi,T_{i}\xi\rangle=\langle S\xi,\xi\rangle\leq B\langle \xi,\xi\rangle=\langle B\xi,\xi\rangle.
	\end{displaymath}
	This shows that
	\begin{displaymath}
		AI\leq S \leq BI,
	\end{displaymath}
	which implies that $S$ is invertible. Further, for any $\xi\in\mathcal{X}$, we have
	\begin{displaymath}
		\xi=S_{T}^{-1}S_{T}\xi=S_{T}^{-1}\sum_{i\in J}T_{i}^{\ast}T_{i}\xi=\sum_{i\in  J}S_{T}^{-1}T_{i}^{\ast}T_{i}\xi.
	\end{displaymath}
This completes the proof.
\end{proof}

\begin{theorem}
	Let $(\mathcal{X},\mathcal{A},\langle.,.\rangle_{\mathcal{A}})$ and $(\mathcal{X},\mathcal{B},\langle.,.\rangle_{\mathcal{B}})$ be two Hilbert pro-$\mathcal{C^{\ast}}$-modules and $\varphi :\mathcal{A}\longrightarrow \mathcal{B}$ be a $\ast$-homomorphism and $\theta \in Hom_{\mathcal{A}}^{\ast}(\mathcal{X})$ be an invertible map such that both are uniformly bounded  and  $\langle \theta \xi,\theta \eta\rangle_{\mathcal{B}}=\varphi(\langle \xi, \eta\rangle_{\mathcal{A}})$ for all $\xi,\eta\in\mathcal{X}$. Suppose that $\{T_{i}\}_{i \in J}$ is an operator frame for $(\mathcal{X},\mathcal{A},\langle.,.\rangle_{\mathcal{A}})$ with frame operator $S_{\mathcal{A}} $ and lower and upper operator frame bounds $A$ and $B$,  respectively. Then $\{\theta T_{i}\}_{i}$ is an operator frame for $(\mathcal{X},\mathcal{B},\langle.,.\rangle_{\mathcal{B}})$ with frame operator $S_{\mathcal{B}} $ and lower and upper operator frame bounds $\|\theta^{-1}\|_{\infty}^{-2}A$ ,$\|\theta\|_{\infty}^{2}B$,  respectively, and $\langle S_{\mathcal{B}} \xi,\eta\rangle_{\mathcal{B}}=\varphi(\langle S_{\mathcal{A}}\xi, \eta\rangle_{\mathcal{A}})$.
\end{theorem}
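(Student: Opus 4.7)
The plan is to derive the $\mathcal{B}$-frame inequality by transporting the $\mathcal{A}$-frame inequality through the $\ast$-homomorphism $\varphi$, using the intertwining relation $\langle \theta\xi,\theta\eta\rangle_{\mathcal{B}}=\varphi(\langle \xi,\eta\rangle_{\mathcal{A}})$ to translate inner products, and then invoking a transported form of Proposition~\ref{Prop2.6} to strip the $\theta$'s off the resulting bounds.

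First, applying the intertwining with the entries replaced by $T_{i}\xi$ gives $\langle \theta T_{i}\xi,\theta T_{i}\xi\rangle_{\mathcal{B}}=\varphi(\langle T_{i}\xi,T_{i}\xi\rangle_{\mathcal{A}})$; summing and commuting $\varphi$ with the sum (using continuity of a $\ast$-homomorphism between pro-$C^{\ast}$-algebras) yields $\sum_{i} \langle \theta T_{i}\xi,\theta T_{i}\xi\rangle_{\mathcal{B}}=\varphi\bigl(\sum_{i}\langle T_{i}\xi,T_{i}\xi\rangle_{\mathcal{A}}\bigr)$. Applying $\varphi$ to the $\mathcal{A}$-frame inequality for $\{T_{i}\}$ and invoking Lemma~\ref{2.7} for monotonicity, and then using $\varphi(\langle \xi,\xi\rangle_{\mathcal{A}})=\langle \theta\xi,\theta\xi\rangle_{\mathcal{B}}$, I arrive at the preliminary estimate $A\langle \theta\xi,\theta\xi\rangle_{\mathcal{B}}\leq \sum_{i}\langle \theta T_{i}\xi,\theta T_{i}\xi\rangle_{\mathcal{B}}\leq B\langle \theta\xi,\theta\xi\rangle_{\mathcal{B}}$.

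The next step is to replace $\langle \theta\xi,\theta\xi\rangle_{\mathcal{B}}$ by $\langle \xi,\xi\rangle_{\mathcal{B}}$ at the cost of the factors $\|\theta^{\pm1}\|_{\infty}$. For this I apply Proposition~\ref{Prop2.6} to $\theta^{-1}\in Hom_{\mathcal{A}}^{\ast}(\mathcal{X})$ (adjointable and uniformly bounded by hypothesis), obtaining $\|\theta\|_{\infty}^{-2}\langle \xi,\xi\rangle_{\mathcal{A}}\leq \langle \theta^{-1}\xi,\theta^{-1}\xi\rangle_{\mathcal{A}}\leq \|\theta^{-1}\|_{\infty}^{2}\langle \xi,\xi\rangle_{\mathcal{A}}$. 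Pushing this through $\varphi$ by Lemma~\ref{2.7}, and observing that the substitution $\zeta=\theta^{-1}\xi$ in the intertwining gives $\varphi(\langle \theta^{-1}\xi,\theta^{-1}\xi\rangle_{\mathcal{A}})=\langle \xi,\xi\rangle_{\mathcal{B}}$ while $\varphi(\langle \xi,\xi\rangle_{\mathcal{A}})=\langle \theta\xi,\theta\xi\rangle_{\mathcal{B}}$, I obtain, after rearranging, $\|\theta^{-1}\|_{\infty}^{-2}\langle \xi,\xi\rangle_{\mathcal{B}}\leq \langle \theta\xi,\theta\xi\rangle_{\mathcal{B}}\leq \|\theta\|_{\infty}^{2}\langle \xi,\xi\rangle_{\mathcal{B}}$. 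Chaining with the previous display delivers the claimed bounds $\|\theta^{-1}\|_{\infty}^{-2}A$ and $\|\theta\|_{\infty}^{2}B$.

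For the frame-operator identity, the same intertwining (now in the polarised form $\langle \theta T_{i}\xi,\theta T_{i}\eta\rangle_{\mathcal{B}}=\varphi(\langle T_{i}\xi,T_{i}\eta\rangle_{\mathcal{A}})$), summed, commuted with $\varphi$ by continuity, and combined with the identification $\sum_{i}\langle T_{i}\xi,T_{i}\eta\rangle_{\mathcal{A}}=\langle S_{\mathcal{A}}\xi,\eta\rangle_{\mathcal{A}}$, produces $\langle S_{\mathcal{B}}\xi,\eta\rangle_{\mathcal{B}}=\varphi(\langle S_{\mathcal{A}}\xi,\eta\rangle_{\mathcal{A}})$. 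I expect the principal obstacle to be precisely the middle step of removing $\theta$ from $\langle \theta\xi,\theta\xi\rangle_{\mathcal{B}}$: Proposition~\ref{Prop2.6} cannot be applied directly in the $\mathcal{B}$-module because $\theta$ is only assumed $\mathcal{A}$-adjointable, so the substitution-through-$\varphi$ trick above is the essential workaround that transfers the norm control from $\mathcal{A}$ into $\mathcal{B}$.
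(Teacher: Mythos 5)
Your proof is correct, and its overall architecture coincides with the paper's: both transport the $\mathcal{A}$-frame inequality through $\varphi$ via Lemma \ref{2.7}, use the intertwining relation to rewrite $\varphi(\langle T_{i}\xi,T_{i}\xi\rangle_{\mathcal{A}})$ as $\langle\theta T_{i}\xi,\theta T_{i}\xi\rangle_{\mathcal{B}}$, arrive at $A\langle\theta\xi,\theta\xi\rangle_{\mathcal{B}}\leq\sum_{i}\langle\theta T_{i}\xi,\theta T_{i}\xi\rangle_{\mathcal{B}}\leq B\langle\theta\xi,\theta\xi\rangle_{\mathcal{B}}$, and then establish the frame-operator identity by the same polarised computation $\varphi(\langle S_{\mathcal{A}}\xi,\eta\rangle_{\mathcal{A}})=\sum_{i}\langle\theta T_{i}\xi,\theta T_{i}\eta\rangle_{\mathcal{B}}=\langle S_{\mathcal{B}}\xi,\eta\rangle_{\mathcal{B}}$. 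Where you genuinely diverge is the middle step, and your version is the more rigorous one. The paper simply cites Proposition \ref{Prop2.6} to assert $\Vert\theta^{-1}\Vert_{\infty}^{-2}\langle\xi,\xi\rangle_{\mathcal{B}}\leq\langle\theta\xi,\theta\xi\rangle_{\mathcal{B}}\leq\Vert\theta\Vert_{\infty}^{2}\langle\xi,\xi\rangle_{\mathcal{B}}$ directly in the $\mathcal{B}$-valued inner product, even though that proposition is stated for an invertible element of $Hom_{\mathcal{A}}^{\ast}(\mathcal{X})$ with the $\mathcal{A}$-valued inner product, and $\theta$ is only assumed $\mathcal{A}$-adjointable --- exactly the obstacle you flagged. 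Your workaround (apply Proposition \ref{Prop2.6} to $\theta^{-1}$ in the $\mathcal{A}$-module to get $\Vert\theta\Vert_{\infty}^{-2}\langle\xi,\xi\rangle_{\mathcal{A}}\leq\langle\theta^{-1}\xi,\theta^{-1}\xi\rangle_{\mathcal{A}}\leq\Vert\theta^{-1}\Vert_{\infty}^{2}\langle\xi,\xi\rangle_{\mathcal{A}}$, push through $\varphi$ by Lemma \ref{2.7}, and use the substitution $\zeta=\theta^{-1}\xi$ in the intertwining to identify $\varphi(\langle\theta^{-1}\xi,\theta^{-1}\xi\rangle_{\mathcal{A}})=\langle\xi,\xi\rangle_{\mathcal{B}}$) legitimately transfers the estimate into $\mathcal{B}$ and yields the same displayed inequality, so your argument actually repairs a formal gap in the published proof at the cost of a few extra lines. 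Two residual caveats apply equally to you and to the paper: commuting $\varphi$ with the infinite sum deserves a word about continuity of $\varphi$ (you at least mention it; the paper does not), and writing $\langle\theta T_{i}\xi,\theta T_{i}\eta\rangle_{\mathcal{B}}=\langle(\theta T_{i})^{\ast}(\theta T_{i})\xi,\eta\rangle_{\mathcal{B}}$ tacitly assumes $\theta T_{i}$ is $\mathcal{B}$-adjointable, which is needed for $S_{\mathcal{B}}$ to be defined at all.
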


\begin{proof} By the definition of operator frame,  we have
	$$A\langle \xi,\xi\rangle_{\mathcal{A}}\leq\sum_{i\in J}\langle T_{i}\xi,T_{i}\xi\rangle_{\mathcal{A}}\leq B\langle \xi,\xi\rangle_{\mathcal{A}} , \forall \xi\in\mathcal{X}.$$
	By Lemma \ref{2.7},  we have
	$$\varphi(A\langle \xi,\xi\rangle_{\mathcal{A}})\leq\varphi(\sum_{i\in J}\langle T_{i}\xi,T_{i}\xi\rangle_{\mathcal{A}})\leq\varphi( B\langle \xi,\xi\rangle_{\mathcal{A}}) , \forall \xi\in\mathcal{X}.$$
	By the definition of $\ast$-homomorphism,  we have
	$$A\varphi(\langle \xi,\xi\rangle_{\mathcal{A}}) \leq\sum_{i\in J}\varphi(\langle T_{i}\xi,T_{i}\xi\rangle_{\mathcal{A}})\leq B\varphi(\langle \xi,\xi\rangle_{\mathcal{A}}) , \forall \xi\in\mathcal{X}.$$
	By the relation betwen $\theta$ and $\varphi$,  we get
	$$A\langle \theta \xi,\theta \xi\rangle_{\mathcal{B}} \leq\sum_{i\in J}\langle \theta T_{i}\xi,\theta T_{i}\xi\rangle_{\mathcal{B}}\leq B\langle\theta \xi,\theta \xi\rangle_{\mathcal{B}} , \forall \xi\in\mathcal{X}.$$
	By Proposition \ref{Prop2.6},  we have
	$$\|\theta^{-1}\|_{\infty}^{-2}\langle \xi,\xi\rangle_{\mathcal{B}}\leq\langle \theta \xi,\theta \xi\rangle_{\mathcal{B}}\leq\|\theta\|_{\infty}^{2}\langle \xi,\xi\rangle_{\mathcal{B}}.$$
	Thus
	$$
	\|\theta^{-1}\|_{\infty}^{-2}A\langle  \xi, \xi\rangle_{\mathcal{B}}\leq\sum_{i\in J}\langle \theta T_{i}\xi,\theta T_{i}\xi\rangle_{\mathcal{B}}\leq\|\theta\|_{\infty}^{2}B\langle \xi,\xi\rangle_{\mathcal{B}} , \quad \forall \xi\in\mathcal{X}.
	$$
	On the other hand,  we have
	$$
	\aligned
	\varphi(\langle S_{\mathcal{A}}\xi, \eta\rangle_{\mathcal{A}})&=\varphi(\langle\sum_{i\in J}T_{i}^{\ast}T_{i}\xi,\eta\rangle_{\mathcal{A}})\\
	&=\sum_{i\in J}\varphi(\langle T_{i}\xi,T_{i}\eta\rangle_{\mathcal{A}})\\
	&=\sum_{i\in J}\langle\theta T_{i}\xi,\theta T_{i}\eta\rangle_{\mathcal{B}}
	\\
	&=\sum_{i\in J}\langle(\theta T_{i})^{\ast}(\theta T_{i})\xi,\eta\rangle_{\mathcal{B}}\\
	&=\langle\sum_{i\in J}(\theta T_{i})^{\ast}(\theta T_{i})\xi,\eta\rangle_{\mathcal{B}}\\
	&=\langle S_{\mathcal{B}} \xi,\eta\rangle_{\mathcal{B}}.
	\endaligned
$$
This  completes the proof.
\end{proof}

\section{Perturbation and stability of operator frames for $Hom_{\mathcal{A}}^{\ast}(\mathcal{X})$}

\begin{theorem}
  Let $\left\{T_{i}\right\}_{i \in J}$ be an operator frame for $Hom_{\mathcal{A}}^{\ast}(\mathcal{X})$ with bounds $A$ and
	B. If $\left\{R_{i}\right\}_{i \in J} \in Hom_{\mathcal{A}}^{\ast}(\mathcal{X})$ is an operator Bessel sequence with a bound $M<A$, then $\left\{T_{i} - R_{i}\right\}_{i \in \mathbb{J}}$ is an operator frame for $Hom_{\mathcal{A}}^{\ast}(\mathcal{X})$.
\end{theorem}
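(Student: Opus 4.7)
The plan is to transfer the frame inequalities to the pro-$C^{\ast}$-seminorms via the equivalent characterization established earlier, apply the triangle inequality inside the Hilbert $\mathcal{A}$-module $\ell^{2}(\mathcal{X})$, and then convert back to module-valued inequalities.

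First, I would fix $p \in S(\mathcal{A})$ and use the seminorm characterization of operator frames (the theorem preceding this section) to recast the hypotheses as
\[A\,\bar{p}_{\mathcal{X}}(\xi)^{2} \leq p\Bigl(\sum_{i\in J}\langle T_{i}\xi, T_{i}\xi\rangle\Bigr) \leq B\,\bar{p}_{\mathcal{X}}(\xi)^{2},\qquad p\Bigl(\sum_{i\in J}\langle R_{i}\xi, R_{i}\xi\rangle\Bigr) \leq M\,\bar{p}_{\mathcal{X}}(\xi)^{2},\]
for every $\xi \in \mathcal{X}$.

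Next, I would interpret the quantity $\sqrt{p(\sum_{i}\langle x_{i}, x_{i}\rangle)}$ as the pro-$C^{\ast}$-seminorm $\bar{p}_{\ell^{2}(\mathcal{X})}$ on the Hilbert $\mathcal{A}$-module $\ell^{2}(\mathcal{X})$. The assignments $\xi\mapsto (T_{i}\xi)_{i\in J}$ and $\xi\mapsto (R_{i}\xi)_{i\in J}$ are $\mathcal{A}$-linear maps into $\ell^{2}(\mathcal{X})$, and the analysis operator of $\{T_{i}-R_{i}\}_{i\in J}$ is their difference. Applying the direct and reverse triangle inequalities for the seminorm $\bar{p}_{\ell^{2}(\mathcal{X})}$ then yields
\[\bigl(\sqrt{A}-\sqrt{M}\bigr)\bar{p}_{\mathcal{X}}(\xi) \leq \bar{p}_{\ell^{2}(\mathcal{X})}\bigl((T_{i}\xi-R_{i}\xi)_{i\in J}\bigr) \leq \bigl(\sqrt{B}+\sqrt{M}\bigr)\bar{p}_{\mathcal{X}}(\xi),\]
where the hypothesis $M<A$ guarantees that the lower constant $\sqrt{A}-\sqrt{M}$ is strictly positive. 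Squaring produces
\[\bigl(\sqrt{A}-\sqrt{M}\bigr)^{2}\bar{p}_{\mathcal{X}}(\xi)^{2} \leq p\Bigl(\sum_{i\in J}\langle (T_{i}-R_{i})\xi,(T_{i}-R_{i})\xi\rangle\Bigr) \leq \bigl(\sqrt{B}+\sqrt{M}\bigr)^{2}\bar{p}_{\mathcal{X}}(\xi)^{2}.\]

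Finally, a second invocation of the seminorm characterization, now in the reverse direction, converts these pointwise seminorm estimates into the genuine operator-frame inequalities and yields that $\{T_{i}-R_{i}\}_{i\in J}$ is an operator frame for $Hom_{\mathcal{A}}^{\ast}(\mathcal{X})$ with explicit bounds $(\sqrt{A}-\sqrt{M})^{2}$ and $(\sqrt{B}+\sqrt{M})^{2}$. The only subtle step is the reverse triangle inequality at the seminorm level; this rests on recognising $\bar{p}_{\ell^{2}(\mathcal{X})}$ as a bona fide pro-$C^{\ast}$-seminorm on $\ell^{2}(\mathcal{X})$, and once this is in hand the remainder is a routine application of the previously established equivalence.
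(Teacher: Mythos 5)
Your proposal is correct, and its skeleton matches the paper's proof: both arguments identify $p\bigl(\sum_{i\in J}\langle x_{i},x_{i}\rangle\bigr)^{1/2}$ with the seminorm $\bar{p}_{\ell^{2}(\mathcal{X})}$ on $\ell^{2}(\mathcal{X})$ and obtain the upper bound $(\sqrt{B}+\sqrt{M})^{2}$ by the triangle inequality there. Where you genuinely diverge is the lower bound, and your route is in fact the sounder one. The paper stays at the level of the squared quantities and asserts
\[
p\Bigl(\sum_{i\in J}\langle T_{i}\xi,T_{i}\xi\rangle\Bigr)\leq p\Bigl(\sum_{i\in J}\langle (T_{i}-R_{i})\xi,(T_{i}-R_{i})\xi\rangle\Bigr)+p\Bigl(\sum_{i\in J}\langle R_{i}\xi,R_{i}\xi\rangle\Bigr),
\]
which is a triangle inequality for the \emph{squares} of the seminorms and fails in general (already for norms, $\|x\|^{2}\leq\|x-y\|^{2}+\|y\|^{2}$ is false when $x=2y$); it then arrives at the constant $A-M$, misprinted as $(A-M)^{2}$ in its final display. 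Your reverse triangle inequality at the seminorm level,
\[
\bar{p}_{\ell^{2}(\mathcal{X})}\bigl((T_{i}\xi-R_{i}\xi)_{i\in J}\bigr)\geq \sqrt{A}\,\bar{p}_{\mathcal{X}}(\xi)-\sqrt{M}\,\bar{p}_{\mathcal{X}}(\xi),
\]
uses only $M<A$ (hence $\sqrt{M}<\sqrt{A}$) and yields the rigorous lower constant $(\sqrt{A}-\sqrt{M})^{2}$; this is smaller than the paper's claimed $A-M$ (since $A-M=(\sqrt{A}-\sqrt{M})(\sqrt{A}+\sqrt{M})$), but unlike the paper's constant it is actually justified. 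Your closing appeal to the seminorm characterization theorem to pass back to module-valued inequalities is exactly how that equivalence is meant to be used, and the only point worth stating explicitly is that $\{(T_{i}-R_{i})\xi\}_{i\in J}$ lies in $\ell^{2}(\mathcal{X})$, which follows since the difference of two Bessel families is Bessel by the same triangle inequality.
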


\begin{proof}
	For any $\xi \in \mathcal{X}$,  we have 
	$$
	\aligned
	p(\sum_{i \in J}\left\langle\left(T_{i}-R_{i}\right) \xi,\left(T_{i}-R_{i}\right) \xi \right\rangle)^{\frac{1}{2}}&=\bar{p}_{\mathcal{X}}\{ (T_{i}-R_{i})\xi \}_{i\in J}
	\\&\leq p(\sum_{i \in J}\langle T_{i} \xi,T_{i} \xi\rangle)^{\frac{1}{2}} + p(\sum_{i \in J}\langle R_{i} \xi,R_{i} \xi\rangle)^{\frac{1}{2}}\\
	&\leq \sqrt{B} \bar{p}_{\mathcal{X}}(\xi) + \sqrt{M} \bar{p}_{\mathcal{X}}(\xi)\\
	&\leq (\sqrt{B}+\sqrt{M})\bar{p}_{\mathcal{X}}(\xi).
	\endaligned
	$$
On the  other hand, 
	$$
	\aligned
	A p(\langle \xi,\xi \rangle)&\leq p(\sum_{i \in J}\langle T_{i} \xi,T_{i} \xi\rangle)\\&\leq p(\sum_{i \in J}\left\langle\left(T_{i}-R_{i}\right) \xi,\left(T_{i}-R_{i}\right) \xi \right\rangle) + p(\sum_{i \in J}\langle R_{i} \xi,R_{i} \xi\rangle)
	\\ &\leq p(\sum_{i \in J}\left\langle\left(T_{i}-R_{i}\right) \xi,\left(T_{i}-R_{i}\right) \xi \right\rangle) + M p(\langle \xi,\xi \rangle).
   \endaligned
	$$
	Thus 
	$$(A-M)p(\langle \xi,\xi \rangle) \leq p(\sum_{i \in J}\left\langle\left(T_{i}-R_{i}\right) \xi,\left(T_{i}-R_{i}\right) \xi \right\rangle). $$
	Consequently,
	$$(A-M)^{2} \bar{p}_{\mathcal{X}}(\xi)^{2} \leq p(\sum_{i \in J}\left\langle\left(T_{i}-R_{i}\right) \xi,\left(T_{i}-R_{i}\right) \xi \right\rangle) \leq(\sqrt{B}+\sqrt{M})^{2}\bar{p}_{\mathcal{X}}(\xi)^{2}. $$
	Hence  $\left\{T_{i} - R_{i}\right\}_{i \in \mathbb{J}}$ is an operator frame for $Hom_{\mathcal{A}}^{\ast}(\mathcal{X})$.
\end{proof}

\begin{theorem}
 Let $\{T_{i}\}_{i \in J}$ be an operator frame for $Hom_{\mathcal{A}}^{*}(\mathcal{X})$ with bound $A$ and $B$ and $\left\{R_{i}\right\}_{i \in J} \in Hom_{\mathcal{A}}^{*}(\mathcal{X}).$ Then the following statements are equivalent:
 \begin{enumerate}
 
	\item[(i)] $\left\{R_{i}\right\}_{i \in J}$ is an operator frame for $Hom_{\mathcal{A}}^{*}(\mathcal{X}).$
   \item[(ii)]  There exists a constant $M>0$ such that for all $x \in \mathcal{X}$, we have
	\begin{eqnarray}\label{Ineq4.1}
&& p(\sum_{i \in J}\left\langle\left(T_{i}-R_{i}\right) \xi,\left(T_{i}-R_{i}\right) \xi\right\rangle) \\ && \nonumber \qquad \leq M \min \left(p(\sum_{i \in J}\left\langle T_{i} \xi, T_{i} \xi\right\rangle),p(\sum_{i \in J}\left\langle R_{i} \xi, R_{i} \xi\right\rangle)\right).
	\end{eqnarray}	
\end{enumerate}
\end{theorem}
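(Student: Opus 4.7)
The plan is to establish the equivalence by reducing everything to estimates on the scalar seminorm $p$, using the seminorm characterization of operator frames already proved earlier in Section~3 (the ``$A\bar{p}_{\mathcal{X}}(\xi)^{2}\le p(\sum_i\langle T_i\xi,T_i\xi\rangle)\le B\bar{p}_{\mathcal{X}}(\xi)^{2}$'' reformulation). The key technical tool in both directions will be the triangle inequality for the seminorm $\{x_i\}_{i\in J}\mapsto p\bigl(\sum_{i\in J}\langle x_i,x_i\rangle\bigr)^{1/2}$ on $l^{2}(\mathcal{X})$, which is the same inequality already deployed in the preceding perturbation theorem, so I will simply invoke it.

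For (i)$\Rightarrow$(ii), I would assume $\{R_i\}_{i\in J}$ is an operator frame with bounds $A',B'$. Because both $\{T_i\}$ and $\{R_i\}$ are frames, the two frame sums are mutually comparable: for every $\xi$,
\[
p\Bigl(\sum_{i\in J}\langle R_i\xi,R_i\xi\rangle\Bigr)\le B'\bar p_{\mathcal{X}}(\xi)^{2}\le \frac{B'}{A}\,p\Bigl(\sum_{i\in J}\langle T_i\xi,T_i\xi\rangle\Bigr),
\]
and symmetrically with $T_i$ and $R_i$ swapped. Feeding these into the $l^{2}(\mathcal{X})$ triangle inequality applied to $T_i\xi - R_i\xi$ gives
\[
p\Bigl(\sum_{i\in J}\langle (T_i-R_i)\xi,(T_i-R_i)\xi\rangle\Bigr)^{1/2}\le \bigl(1+\sqrt{B'/A}\bigr)\,p\Bigl(\sum_{i\in J}\langle T_i\xi,T_i\xi\rangle\Bigr)^{1/2},
\]
and the analogue with $T$ and $R$ exchanged. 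Choosing $M=\max\bigl\{(1+\sqrt{B'/A})^{2},(1+\sqrt{B/A'})^{2}\bigr\}$ then yields (\ref{Ineq4.1}).

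For the converse (ii)$\Rightarrow$(i), I plan to pull both an upper and a lower frame bound for $\{R_i\}$ out of (\ref{Ineq4.1}). Using the triangle inequality together with (\ref{Ineq4.1}) in the form that bounds the perturbation by $\sqrt M\,p(\sum\langle T_i\xi,T_i\xi\rangle)^{1/2}$ produces
\[
p\Bigl(\sum_{i\in J}\langle R_i\xi,R_i\xi\rangle\Bigr)^{1/2}\le (1+\sqrt M)\,p\Bigl(\sum_{i\in J}\langle T_i\xi,T_i\xi\rangle\Bigr)^{1/2}\le (1+\sqrt M)\sqrt{B}\,\bar p_{\mathcal{X}}(\xi),
\]
which is the upper frame inequality. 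Combining the reverse triangle inequality with (\ref{Ineq4.1}) in the form that bounds the perturbation by $\sqrt M\,p(\sum\langle R_i\xi,R_i\xi\rangle)^{1/2}$ yields $\sqrt A\,\bar p_{\mathcal{X}}(\xi)\le (1+\sqrt M)\,p\bigl(\sum_{i\in J}\langle R_i\xi,R_i\xi\rangle\bigr)^{1/2}$, which is the lower one. Invoking the seminorm characterization of operator frames then upgrades these $p$-inequalities to the $\mathcal{A}^{+}$-valued statement of (i).

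I do not anticipate a serious obstacle: this is the pro-$C^{\ast}$-module analogue of the classical Hilbert-space perturbation result, and the only mild subtlety is the passage between $\mathcal{A}^{+}$-valued frame inequalities and scalar $p$-seminorm inequalities. Since that passage is already available from Section~3 and the $l^{2}(\mathcal{X})$ triangle inequality is in hand, what remains is essentially bookkeeping of the constants.
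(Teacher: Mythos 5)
Your proposal is correct and takes essentially the same approach as the paper: both directions rest on the triangle inequality for the seminorm $\{x_i\}_{i\in J}\mapsto p\bigl(\sum_{i\in J}\langle x_i,x_i\rangle\bigr)^{1/2}$ on $l^{2}(\mathcal{X})$ combined with the Section~3 seminorm characterization of operator frames, exactly as in the paper's proof. Your constant $M=\max\bigl\{(1+\sqrt{B'/A})^{2},\,(1+\sqrt{B/A'})^{2}\bigr\}$ in fact corrects a slip in the paper, which takes $M=\min\bigl\{1+\sqrt{D/A},\,1+\sqrt{B/C}\bigr\}$ --- unsquared, and a minimum rather than a maximum --- which does not literally yield the claimed two-sided bound in (ii).
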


\begin{proof}
	First, suppose that $\left\{R_{i}\right\}_{i \in J} \in Hom_{\mathcal{A}}^{*}(\mathcal{X})$ is an operator frame with bounds $C$ and $D$. Then for any $\xi \in \mathcal{X}$, we have 
	$$
	\aligned
	p(\sum_{i \in J}\left\langle\left(T_{i}-R_{i}\right) \xi,\left(T_{i}-R_{i}\right) \xi \right\rangle)^{\frac{1}{2}}&=\bar{p}_{\mathcal{X}}\{ (T_{i}-R_{i})\xi \}_{i\in J}
	\\&\leq p(\sum_{i \in J}\langle T_{i} \xi,T_{i} \xi\rangle)^{\frac{1}{2}} + p(\sum_{i \in J}\langle R_{i} \xi,R_{i} \xi\rangle)^{\frac{1}{2}}\\
	&\leq  p(\sum_{i \in J}\langle T_{i} \xi,T_{i} \xi\rangle)^{\frac{1}{2}} + \sqrt{D} \bar{p}_{\mathcal{X}}(\xi)\\&\leq  p(\sum_{i \in J}\langle T_{i} \xi,T_{i} \xi\rangle)^{\frac{1}{2}} + \sqrt{\frac{D}{A}} p(\sum_{i \in J}\langle T_{i} \xi,T_{i} \xi\rangle)^{\frac{1}{2}}\\
	&\leq (1+\sqrt{\frac{D}{A}})p(\sum_{i \in J}\langle T_{i} \xi,T_{i} \xi\rangle)^{\frac{1}{2}}.
	\endaligned
	$$

	Similarly, we can obtain 
	$$ 	p(\sum_{i \in J}\left\langle\left(T_{i}-R_{i}\right) \xi,\left(T_{i}-R_{i}\right) \xi \right\rangle)^{\frac{1}{2}} \leq (1+\sqrt{\frac{B}{C}})p(\sum_{i \in J}\langle R_{i} \xi,R_{i} \xi\rangle)^{\frac{1}{2}}. $$
	Let $M=\min \left\{1+\sqrt{\frac{D}{A}}, 1+\sqrt{\frac{B}{C}}\right\}$. Then (\ref{Ineq4.1}) holds.
	
	Conversely, suppose that (\ref{Ineq4.1}) holds. For every $\xi \in \mathcal{X}$, we have
	$$
	\aligned
		\sqrt{A}\bar{p}_{\mathcal{X}}(\xi)
		&\leq p(\sum_{i \in J}\langle T_{i} \xi,T_{i} \xi\rangle)^{\frac{1}{2}} =\bar{p}_{\mathcal{X}}\{T_{i} \xi\}_{i \in J} \\ 
		&\leq \bar{p}_{\mathcal{X}}\{ (T_{i}-R_{i})\xi \}_{i\in J}+\bar{p}_{\mathcal{X}}\{R_{i} \xi\}_{i \in J}\\ 
		&\leq p(\sum_{i \in J}\left\langle\left(T_{i}-R_{i}\right) \xi,\left(T_{i}-R_{i}\right) \xi \right\rangle)^{\frac{1}{2}} + p(\sum_{i \in J}\langle R_{i} \xi,R_{i} \xi\rangle)^{\frac{1}{2}}\\
		&\leq \sqrt{M}p(\sum_{i \in J}\langle R_{i} \xi,R_{i} \xi\rangle)^{\frac{1}{2}}+ p(\sum_{i \in J}\langle R_{i} \xi,R_{i} \xi\rangle)^{\frac{1}{2}} \\
		&=(\sqrt{M}+1)p(\sum_{i \in J}\langle R_{i} \xi,R_{i} \xi\rangle)^{\frac{1}{2}}. 		
	\endaligned
	$$
	Also we have
	$$
	\aligned
	& p(\sum_{i \in J}\langle R_{i} \xi,R_{i} \xi\rangle)^{\frac{1}{2}} =\bar{p}_{\mathcal{X}}\{R_{i} \xi\}_{i \in J}
	\\ &\leq \bar{p}_{\mathcal{X}}\{ (T_{i}-R_{i})\xi \}_{i\in J}+\bar{p}_{\mathcal{X}}\{T_{i} \xi\}_{i \in J}\\
	&\leq p(\sum_{i \in J}\left\langle\left(T_{i}-R_{i}\right) \xi,\left(T_{i}-R_{i}\right) \xi \right\rangle)^{\frac{1}{2}} + p(\sum_{i \in J}\langle T_{i} \xi,T_{i} \xi\rangle)^{\frac{1}{2}}\\
	&\leq \sqrt{M}p(\sum_{i \in J}\langle T_{i} \xi,T_{i} \xi\rangle)^{\frac{1}{2}}+ p(\sum_{i \in J}\langle T_{i} \xi,T_{i} \xi\rangle)^{\frac{1}{2}} \\
	&=(\sqrt{M}+1)p(\sum_{i \in J}\langle T_{i} \xi,T_{i} \xi\rangle)^{\frac{1}{2}}.	
	\endaligned
	$$
	Thus 
	$$
	\frac{A}{(\sqrt{M}+1)^{2}}\bar{p}_{\mathcal{X}}( \xi)^{2}\leq p(\sum_{i \in \mathbb{J}}\left\langle R_{i} x, R_{i} x\right\rangle) \leq B(\sqrt{M}+1)^{2}\bar{p}_{\mathcal{X}}( \xi)^{2}. 								
	$$
So  $\left\{R_{i}\right\}_{i \in J}$ is an operator frame for $Hom_{\mathcal{A}}^{*}(\mathcal{X}).$
\end{proof}

\section{Tensor product}

The minimal or injective tensor product of the pro-$C^{\ast}$-algebras $\mathcal{A}$ and $\mathcal{B}$, denoted by $\mathcal{A} \otimes \mathcal{B}$, is the completion of the algebraic tensor product $\mathcal{A} \otimes_{\text {alg }} \mathcal{B}$ with respect to the topology determined by a family of $C^{\ast}$-seminorms. Suppose that $\mathcal{X}$ is a Hilbert module over a pro-$C^{\ast}$-algebra $\mathcal{A}$ and $\mathcal{Y}$ is a Hilbert module over a pro-$C^{\ast}$-algebra $\mathcal{B}$. The algebraic tensor product $\mathcal{X} \otimes_{\text {alg }} \mathcal{Y}$ of $\mathcal{X}$ and $\mathcal{Y}$ is a pre-Hilbert $ \mathcal{A} \otimes \mathcal{B}$-module with the action of $ \mathcal{A} \otimes \mathcal{B}$ on $ \mathcal{X} \otimes_{\text {alg }}\mathcal{Y}$ defined by
$$
(\xi \otimes \eta)(a \otimes b)=\xi a \otimes \eta b
$$
for all $ \xi \in \mathcal{X} ,\eta \in \mathcal{Y}, a \in \mathcal{A}$ and $b \in \mathcal{B}$
and the inner product 
$$
\langle\cdot, \cdot\rangle:\left(\mathcal{X} \otimes_{\text {alg }} \mathcal{Y}\right) \times\left(\mathcal{X} \otimes_{\text {alg }} \mathcal{Y}\right) \rightarrow \mathcal{A} \otimes_{\text {alg }}\mathcal{B} 
$$
is  defined by 
$$
\left\langle\xi_{1} \otimes \eta_{1}, \xi_{2} \otimes \eta_{2}\right\rangle=\left\langle\xi_{1}, \xi_{2}\right\rangle \otimes\left\langle\eta_{1}, \eta_{2}\right\rangle.
$$
We also know that for $ z=\sum_{i=1}^{n}\xi_{i}\otimes \eta_{i} $ in $\mathcal{X}\otimes_{alg}\mathcal{Y}$, we have $ \langle z,z\rangle_{\mathcal{A}\otimes\mathcal{B}}=\sum_{i,j}\langle \xi_{i},\xi_{j}\rangle_{\mathcal{A}}\otimes\langle \eta_{i},\eta_{j}\rangle_{\mathcal{B}}\geq0 $ and $ \langle z,z\rangle_{\mathcal{A}\otimes\mathcal{B}}=0 $ if and only if $z=0$.\\
The external tensor product of $\mathcal{X}$ and $\mathcal{Y}$ is the Hilbert module $\mathcal{X} \otimes \mathcal{Y}$ over $\mathcal{A} \otimes \mathcal{B}$ obtained by the completion of the pre-Hilbert $\mathcal{A} \otimes \mathcal{B}$-module $\mathcal{X} \otimes_{\text {alg }} \mathcal{Y}$.

If $P \in M(\mathcal{X})$ and $Q \in M(\mathcal{Y})$ then there is a unique adjointable module morphism $P \otimes Q: \mathcal{A} \otimes  \mathcal{B} \rightarrow \mathcal{X} \otimes \mathcal{Y}$ such that $(P \otimes Q)(a \otimes b)=P(a) \otimes Q(b)$ and $(P \otimes Q)^{*}(a \otimes b)=P^{*}(a) \otimes Q^{*}(b)$ for all $a \in A$ and  all $b \in B$ (see \cite{Joita}).

\begin{theorem}
	Let $\mathcal{X}$ and $\mathcal{Y}$ be two Hilbert pro-$C^{\ast}$-modules over unitary pro-$C^{\ast}$-algebras $\mathcal{A}$ and $\mathcal{B}$, respectively. Let $\{T_{i}\}_{i\in I} $ and  $\{L_{j}\}_{j\in J}$ be two operator frames for $\mathcal{X}$ and $\mathcal{Y}$ with frame operators $S_{T}$ and $S_{L}$ and operator frame bounds $(A,B)$ and $(C,D)$, respectively. Then $\{T_{i}\otimes L_{j}\}_{i\in I,j\in J}  $ is an operator frame for the Hibert $\mathcal{A}\otimes\mathcal{B}$-module $\mathcal{X}\otimes\mathcal{Y}$ with frame operator $ S_{T}\otimes S_{L}$ and lower and upper operator frame bounds $A\otimes C$ and $ B\otimes D $, respectively.
\end{theorem}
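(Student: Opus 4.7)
The plan is to leverage the frame-operator characterization from the preceding theorem, reducing the tensor statement to (i) identifying the frame operator of $\{T_i\otimes L_j\}_{i\in I,j\in J}$ with $S_T\otimes S_L$, and (ii) deriving the operator inequality $AC\,I\le S_T\otimes S_L\le BD\,I$ from the individual bounds. Concretely, the frame hypotheses translate to $A\,I_{\mathcal{X}}\le S_T\le B\,I_{\mathcal{X}}$ in $Hom_{\mathcal{A}}^{\ast}(\mathcal{X})$ and $C\,I_{\mathcal{Y}}\le S_L\le D\,I_{\mathcal{Y}}$ in $Hom_{\mathcal{B}}^{\ast}(\mathcal{Y})$; these are the inequalities I will tensor at the end, after the frame operator of the tensor family has been recognized.

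First I would identify the frame operator. Using $(P\otimes Q)^{\ast}=P^{\ast}\otimes Q^{\ast}$ and the product rule $(P\otimes Q)(R\otimes S)=PR\otimes QS$ for adjointable tensor maps recalled at the end of Section~2, the computation
\[
(T_i\otimes L_j)^{\ast}(T_i\otimes L_j)(\xi\otimes\eta)=(T_i^{\ast}T_i\xi)\otimes(L_j^{\ast}L_j\eta)
\]
is immediate on simple tensors. Summing and factoring then yields
\[
S_{T\otimes L}(\xi\otimes\eta)=\Bigl(\sum_{i\in I}T_i^{\ast}T_i\xi\Bigr)\otimes\Bigl(\sum_{j\in J}L_j^{\ast}L_j\eta\Bigr)=(S_T\otimes S_L)(\xi\otimes\eta).
\]
Both $S_{T\otimes L}$ and $S_T\otimes S_L$ are continuous adjointable $\mathcal{A}\otimes\mathcal{B}$-module maps on $\mathcal{X}\otimes\mathcal{Y}$ agreeing on the dense algebraic tensor product, so they coincide on the completion.

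Next I would derive the bounds by the telescoping identity
\[
S_T\otimes S_L-AC\,I=(S_T-AI)\otimes S_L+AI\otimes(S_L-CI),
\]
together with its twin $BD\,I-S_T\otimes S_L=(BI-S_T)\otimes S_L+BI\otimes(DI-S_L)$. Each summand is the tensor of two positive adjointable operators, hence positive in $Hom_{\mathcal{A}\otimes\mathcal{B}}^{\ast}(\mathcal{X}\otimes\mathcal{Y})$, giving $AC\,I\le S_T\otimes S_L\le BD\,I$. Rewriting in inner-product form via $\langle S_{T\otimes L}z,z\rangle=\sum_{i,j}\langle(T_i\otimes L_j)z,(T_i\otimes L_j)z\rangle$ produces the frame inequality for $\{T_i\otimes L_j\}$ on all of $\mathcal{X}\otimes\mathcal{Y}$, with exactly the claimed bounds and frame operator.

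The main obstacle is the positivity/convergence bookkeeping inside the completed pro-$C^{\ast}$ tensor product $\mathcal{A}\otimes\mathcal{B}$: I must verify that the tensor of positive elements remains positive after the injective completion and that the doubly-indexed sum $\sum_{i,j}(T_i\otimes L_j)^{\ast}(T_i\otimes L_j)$ converges in the sense required by the operator-frame definition. Both issues reduce to checking the inequalities seminorm-by-seminorm in $S(\mathcal{A}\otimes\mathcal{B})$, using that each continuous $C^{\ast}$-seminorm on $\mathcal{A}\otimes\mathcal{B}$ is the injective cross-seminorm of $C^{\ast}$-seminorms on $\mathcal{A}$ and $\mathcal{B}$, which reduces the question to the classical $C^{\ast}$-algebra setting where positivity and convergence of the relevant sums are standard.
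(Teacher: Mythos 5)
Your proposal is correct, but it inverts the order of the paper's argument and replaces its central step with a different decomposition. The paper works at the level of inner products: it tensors the two frame inequalities on elementary tensors, obtaining $(A\otimes C)\langle \xi\otimes\eta,\xi\otimes\eta\rangle\leq\sum_{i,j}\langle(T_i\otimes L_j)(\xi\otimes\eta),(T_i\otimes L_j)(\xi\otimes\eta)\rangle\leq(B\otimes D)\langle\xi\otimes\eta,\xi\otimes\eta\rangle$, then asserts the inequality ``is satisfied for every finite sum of elements in $\mathcal{X}\otimes_{\mathrm{alg}}\mathcal{Y}$'' and passes to the completion, and only afterwards computes $S_{T\otimes L}=S_T\otimes S_L$ on simple tensors. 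You proceed in the opposite order: identify the frame operator first, then establish the operator inequality $AC\,I\leq S_T\otimes S_L\leq BD\,I$ via the telescoping identity $S_T\otimes S_L-AC\,I=(S_T-AI)\otimes S_L+AI\otimes(S_L-CI)$, and read off the frame inequality from $\langle S_{T\otimes L}z,z\rangle$. Both arguments ultimately rest on the same fact --- tensors of positive elements/operators are positive; indeed the paper's implicit step $a_1\otimes a_2\leq b_1\otimes b_2$ for $0\leq a_k\leq b_k$ is itself proved by exactly your telescoping trick --- but your operator-level formulation buys something concrete: the paper's extension of the frame inequality from elementary tensors to finite sums is not automatic, since the inequality is quadratic rather than linear in $z$, and the paper gives no justification for it; in your version the inequality holds between continuous adjointable operators agreeing with $S_{T\otimes L}$ on a dense submodule, so it extends to all of $\mathcal{X}\otimes\mathcal{Y}$ with no further argument, quietly repairing the weakest point of the published proof. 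The convergence bookkeeping you flag for the doubly-indexed sum at general $z$ is left equally implicit in the paper, and your seminorm-by-seminorm reduction to the $C^{\ast}$-algebra case is a reasonable way to settle it.
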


\begin{proof}
	By the definition of operator frames $\{T_{i}\}_{i\in I} $ and $\{L_{j}\}_{j\in J}$, we have
	$$A\langle \xi,\xi\rangle\leq\sum_{i\in I}\langle T_{i}\xi,T_{i}\xi\rangle\leq B\langle \xi,\xi\rangle , \forall \xi \in\mathcal{X},$$
	$$C\langle \eta,\eta\rangle\leq\sum_{j\in J}\langle L_{j}\eta,L_{j}\eta\rangle\leq D\langle \eta,\eta\rangle , \forall \eta\in\mathcal{Y}.$$
	Thus
	$$
	\aligned
	(A\langle \xi,\xi\rangle)\otimes (C\langle \eta,\eta\rangle)&\leq\sum_{i\in I}\langle T_{i}\xi,T_{i}\xi\rangle\otimes\sum_{j\in J}\langle L_{j}\eta,L_{j}\eta\rangle\\
	&\leq (B\langle \xi,\xi\rangle)\otimes (D\langle \eta,\eta\rangle) , \forall \xi\in\mathcal{X} ,\forall \eta\in\mathcal{Y}.
	\endaligned
	$$
	So 
	$$
	\aligned
	(A\otimes C)(\langle \xi,\xi\rangle\otimes\langle \eta,\eta \rangle) &\leq\sum_{i\in I,j\in J}\langle T_{i}\xi,T_{i}\xi\rangle\otimes\langle L_{j}\eta,L_{j}\eta\rangle\\
	&\leq (B\otimes D)(\langle \xi,\xi\rangle\otimes\langle \eta,\eta\rangle) , \forall \xi\in\mathcal{X} ,\forall \eta\in\mathcal{Y}.
	\endaligned
	$$
	Consequently, we have
	$$
	\aligned
	(A\otimes C)\langle \xi\otimes \eta,\xi\otimes \eta\rangle &\leq\sum_{i\in I,j\in J}\langle T_{i}\xi\otimes L_{j}\eta,T_{i}\xi\otimes L_{j}\eta\rangle \\
	&\leq (B\otimes D)\langle \xi\otimes \eta,\xi\otimes \eta\rangle, \forall \xi\in\mathcal{X}, \forall \eta\in\mathcal{Y}.
	\endaligned
	$$
	Hence for all $\xi\otimes \eta\in\mathcal{X\otimes Y}$, we have 
	$$
	\aligned
	(A\otimes C)\langle \xi\otimes \eta,\xi\otimes \eta\rangle &\leq\sum_{i\in I,j\in J}\langle(T_{i}\otimes L_{j})(\xi\otimes \eta),(T_{i}\otimes L_{j})(\xi\otimes \eta)\rangle \\
	&\leq (B\otimes D)\langle \xi\otimes \eta,\xi\otimes \eta\rangle.
	\endaligned
	$$
	The last inequality is satisfied for every finite sum of elements in $\mathcal{X}\otimes_{alg}\mathcal{Y}$ and so  it is satisfied for all $z\in\mathcal{X\otimes Y}$. This shows that $\{T_{i}\otimes L_{j}\}_{i\in I,j\in J}  $ is an operator frame for the Hibert $\mathcal{A}\otimes\mathcal{B}$-module $\mathcal{X}\otimes\mathcal{Y}$ with lower and upper operator frame bounds $A\otimes C$ and $ B\otimes D $, respectively.

	By the definition of frame operators $S_{T}$ and $S_{L}$, we have $$S_{T}\xi=\sum_{i\in I}T_{i}^{\ast}T_{i}\xi, \quad \forall \xi\in\mathcal{X},$$
	$$S_{L}\eta=\sum_{j\in J}L_{j}^{\ast}L_{j}\eta,\quad  \forall \eta\in\mathcal{Y}.$$
	Thus 
	$$
	\aligned
	(S_{T}\otimes S_{L})(\xi\otimes \eta)&=S_{T}\xi\otimes S_{L}\eta\\
	&=\sum_{i\in I}T_{i}^{\ast}T_{i}\xi\otimes\sum_{j\in J}L_{j}^{\ast}L_{j}\eta\\
	&=\sum_{i\in I,j\in J}T_{i}^{\ast}T_{i}\xi\otimes L_{j}^{\ast} L_{j}\eta\\
	&=\sum_{i\in I,j\in J}(T_{i}^{\ast}\otimes L_{j}^{\ast})(T_{i}\xi\otimes L_{j}\eta)\\
	&=\sum_{i\in I,j\in J}(T_{i}^{\ast}\otimes L_{j}^{\ast})(T_{i}\otimes L_{j})(\xi\otimes \eta)\\
	&=\sum_{i\in I,j\in J}(T_{i}\otimes L_{j})^{\ast}(T_{i}\otimes L_{j})(\xi\otimes \eta).
	\endaligned
	$$

	Now by the uniqueness of frame operator, the last expression is equal to $S_{T\otimes L}(\xi \otimes \eta)$. Consequently, we have $ (S_{T}\otimes S_{L})(\xi\otimes \eta)=S_{T\otimes L}(\xi\otimes \eta)$. The last equality is satisfied for every finite sum of elements in $\mathcal{X}\otimes_{alg}\mathcal{Y}$ and so  it is satisfied for all $z\in\mathcal{X\otimes Y}$. This  shows that $ (S_{T}\otimes S_{L})(z)=S_{T\otimes L}(z)$. So $S_{T\otimes L}=S_{T}\otimes S_{L}$.
\end{proof}

\section{Dual of operator frame for $Hom_{\mathcal{A}}^{\ast}(\mathcal{X})$}

\begin{definition} 
	Let $T=\{T_{i}\}_{i\in  J}$ be an operator frame for $Hom_{\mathcal{A}}^{\ast}(\mathcal{X})$. A family of operators $\tilde{T}=\{\tilde{T}_{i}\}_{i\in J}$ on $\mathcal{X}$ is called a dual of the operator frame $\{T_{i}\}_{i\in J}$ if they satisfy 
	\begin{equation} \label{108}
	\xi=\sum_{i\in J}T_{i}^{\ast}\tilde{T}_{i}\xi, \quad \forall \xi\in\mathcal{X}.
	\end{equation}
	Furthermore, we call $\{\tilde{T}_{i}\}_{i\in J}$ a dual frame of the operator frame $\{T_{i}\}_{i\in J}$ if $\{\tilde{T}_{i}\}_{i\in J}$ is also an operator frame for $Hom_{\mathcal{A}}^{\ast}(\mathcal{X})$ and satisfies  (\ref{108}).	
\end{definition}

\begin{theorem}
	Every operator frame for $Hom_{\mathcal{A}}^{\ast}(\mathcal{X})$ has a dual frame.
\end{theorem}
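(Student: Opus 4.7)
The plan is to construct the canonical dual frame explicitly, mimicking the classical frame-theoretic construction. Given the operator frame $T=\{T_{i}\}_{i\in J}$ with frame operator $S$, I would set
$$\tilde{T}_{i}:=T_{i}S^{-1},\qquad i\in J.$$
Since $S\in Hom_{\mathcal{A}}^{\ast}(\mathcal{X})$ is positive, self-adjoint and invertible (by the theorem giving $AI\le S\le BI$ in Section~3), $S^{-1}$ lies in $Hom_{\mathcal{A}}^{\ast}(\mathcal{X})$ as well, so each $\tilde{T}_{i}$ is adjointable.

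The reconstruction identity \eqref{108} follows from a direct computation: for every $\xi\in\mathcal{X}$,
$$\sum_{i\in J}T_{i}^{\ast}\tilde{T}_{i}\xi=\sum_{i\in J}T_{i}^{\ast}T_{i}S^{-1}\xi=S(S^{-1}\xi)=\xi,$$
where the rearrangement is legitimate because the series defining $S$ converges in the frame-operator sense of Section~3.

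It then remains to verify that $\{\tilde{T}_{i}\}_{i\in J}$ is itself an operator frame. For each $\xi\in\mathcal{X}$, using self-adjointness of $S^{-1}$,
$$\sum_{i\in J}\langle \tilde{T}_{i}\xi,\tilde{T}_{i}\xi\rangle=\sum_{i\in J}\langle T_{i}S^{-1}\xi,T_{i}S^{-1}\xi\rangle=\langle SS^{-1}\xi,S^{-1}\xi\rangle=\langle S^{-1}\xi,\xi\rangle.$$
From $AI\le S\le BI$ and the order-reversing property of inversion (Proposition in Section~2, part (5)), one gets $B^{-1}I\le S^{-1}\le A^{-1}I$, hence
$$B^{-1}\langle \xi,\xi\rangle\le \sum_{i\in J}\langle \tilde{T}_{i}\xi,\tilde{T}_{i}\xi\rangle \le A^{-1}\langle \xi,\xi\rangle, \qquad \forall \xi\in\mathcal{X},$$
which exhibits $\{\tilde{T}_{i}\}_{i\in J}$ as an operator frame with bounds $B^{-1}$ and $A^{-1}$, and together with the reconstruction formula above this makes it a dual frame of $\{T_{i}\}_{i\in J}$.

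The only genuinely non-routine point is justifying the interchange of $S^{-1}$ with the (possibly only weakly convergent) sum $\sum T_{i}^{\ast}T_{i}$ and the analogous manipulation in $\sum \langle T_{i}S^{-1}\xi, T_{i}S^{-1}\xi\rangle=\langle SS^{-1}\xi, S^{-1}\xi\rangle$; this uses that $S^{-1}$ is adjointable and commutes past the partial sums, which is fine because convergence is understood in the same sense as in the definition of $S$. No additional structure beyond what is already established in the preliminaries and Section~3 is required.
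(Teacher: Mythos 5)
Your proposal is correct and takes essentially the same approach as the paper: both construct the canonical dual $\tilde{T}_{i}=T_{i}S^{-1}$ and obtain the reconstruction formula from $\xi=S S^{-1}\xi=\sum_{i\in J}T_{i}^{\ast}T_{i}S^{-1}\xi$. The only divergence is in verifying the frame inequalities for $\{\tilde{T}_{i}\}_{i\in J}$: the paper applies Proposition \ref{Prop2.6} to $S_{T}^{-1}\xi$ and gets bounds $A\|S_{T}\|_{\infty}^{-2}$ and $B\|S_{T}^{-1}\|_{\infty}^{2}$, whereas you use the identity $\sum_{i\in J}\langle \tilde{T}_{i}\xi,\tilde{T}_{i}\xi\rangle=\langle S^{-1}\xi,\xi\rangle$ together with $AI\leq S\leq BI$ and the order-reversal of inversion to obtain the sharper classical bounds $B^{-1}$ and $A^{-1}$ --- a minor but clean improvement within the same argument.
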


\begin{proof}
	If $T=\{T_{i}\}_{i\in J}$ is an operator frame for $Hom_{\mathcal{A}}^{\ast}(\mathcal{X})$ with bounds $A$ and $ B$, then the operator sequence $\tilde{T}=\{T_{i}S_{T}^{-1}\}_{i\in J}$ is a dual frame of $T=\{T_{i}\}_{i\in J}$. In fact, we have
	\begin{displaymath}
	\xi=S_{T}S_{T}^{-1}\xi=\sum_{i\in J}T_{i}^{\ast}T_{i}S_{T}^{-1}\xi=\sum_{i\in J}T_{i}^{\ast}\tilde{T}_{i}\xi, \quad \forall \xi\in\mathcal{X}.
	\end{displaymath}
	And \begin{math}
	\tilde{T}=\{T_{i}S_{T}^{-1}\}_{i\in J}
	\end{math} satisfies
	\begin{displaymath}
	A\|S_{T}\|_{\infty}^{-2}\langle \xi,\xi\rangle\leq\sum_{i\in J}\langle\tilde{T}_{i}\xi,\tilde{T}_{i}\xi\rangle=\sum_{i\in J}\langle\tilde{T}_{i}S_{T}^{-1}\xi,\tilde{T}_{i}S_{T}^{-1}\xi\rangle\leq B\|S_{T}^{-1}\|_{\infty}^{2}\langle \xi,\xi\rangle
	\end{displaymath}
for all $\xi\in\mathcal{X}$.
	Hence \begin{math}
	\{T_{i}S_{T}^{-1}\}_{i\in J}
	\end{math}
	is a  dual frame of $\{T_{i}\}_{i\in J}$., which is called  the canonical dual frame of $\{T_{i}\}_{i\in J}$.
\end{proof}

\begin{remark}
	Assume that $T=\{T_{i}\}_{i\in J}$ is an operator frame for $Hom_{\mathcal{A}}^{\ast}(\mathcal{X})$ with analytic operator $R_{T}$ and  $\tilde{T}=\{\tilde{T}_{i}\}_{i\in J}$ is a dual frame of $T$ with analytic operator $R_{\tilde{T}}$. Then for any $\xi$ in $\mathcal{X}$, we have
	\begin{displaymath}
	\xi=\sum_{i\in J}T_{i}^{\ast}\tilde{T}_{i}\xi=R_{T}^{\ast}R_{\tilde{T}}\xi.
	\end{displaymath}
	This shows that every element of $\mathcal{X}$ can be reconstructed with an operator frame for $Hom_{\mathcal{A}}^{\ast}(\mathcal{X})$ and its dual frame.\\
	Moreover, we also have another fact that for any operator $A$ on $\mathcal{X}$, we get
	\begin{displaymath}
	A\xi=\sum_{i\in J}T_{i}^{\ast}\tilde{T}_{i}A\xi.
	\end{displaymath}
	That is, an association of operator frame and its dual frame can reconstruct pointwisely an  operator on $\mathcal{X}$ and so we can write
	\begin{displaymath}
	A=\sum_{i\in J}T_{i}^{\ast}\tilde{T}_{i}A.
	\end{displaymath} 
\end{remark}

For a frame of submodules $\{W_{i}\}_{i\in J}$ with respect to the family of weights $ \{v_{i}\}_{i\in J} $ for $\mathcal{X}$ with synthesis operator $T_{W,v}$, the sequence $\{u_{i}\}_{i\in J}=\{S_{W,v}^{-1}W_{i}\}_{i\in J}$ is called a  dual frame of $\{W_{i}\}_{i\in J}$ with the operator $S_{W,v}=T_{W,v}T_{W,v}^{\ast}$.

\begin{theorem}
	For a frame of submodules $\{W_{i}\}_{i\in J}$ with respect to the family of weights $ \{v_{i}\}_{i\in J} $ for $\mathcal{X}$, define $T_{i}=v_{i}S_{W,v}\pi_{W_{i}}S_{W,v}^{-1}$ and $Q_{i}=v_{i}\pi_{u_{i}}S_{W,v}^{-1}$ such that $S_{W,v}$  and its inverse are uniformly bounded. Then $Q=\{Q_{i}\}_{i\in J}$ and $T=\{T_{i}\}_{i\in J}$ are all operator frames for $Hom_{\mathcal{A}}^{\ast}(\mathcal{X})$ and $Q$ is a dual frame of $T$.
\end{theorem}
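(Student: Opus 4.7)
The plan: three things need verification---that $T=\{T_i\}$ and $Q=\{Q_i\}$ are operator frames for $Hom_{\mathcal{A}}^{\ast}(\mathcal{X})$, and that $\xi=\sum_{i\in J}T_i^{\ast}Q_i\xi$ for every $\xi\in\mathcal{X}$ (the definition of $Q$ being a dual of $T$). The overall strategy is bookkeeping: Proposition~\ref{Prop2.6} is used repeatedly to ferry norms on and off the uniformly bounded invertible operator $S_{W,v}$, while the fusion-frame inequality for $\{(W_i,v_i)\}$ supplies the underlying bounds $A,B$.

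For the frame bounds of $T$, I would first expand $\langle T_i\xi, T_i\xi\rangle = v_i^2\langle S_{W,v}(\pi_{W_i}S_{W,v}^{-1}\xi),\,S_{W,v}(\pi_{W_i}S_{W,v}^{-1}\xi)\rangle$ and apply Proposition~\ref{Prop2.6} to the invertible operator $S_{W,v}$, sandwiching $\langle T_i\xi,T_i\xi\rangle$ between $\|S_{W,v}^{-1}\|_\infty^{-2}$ and $\|S_{W,v}\|_\infty^{2}$ times $v_i^2\langle\pi_{W_i}S_{W,v}^{-1}\xi,\pi_{W_i}S_{W,v}^{-1}\xi\rangle$. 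Summing over $i$, applying the fusion-frame inequality to the vector $S_{W,v}^{-1}\xi$, and then applying Proposition~\ref{Prop2.6} once more to $S_{W,v}^{-1}$ to convert $\langle S_{W,v}^{-1}\xi,S_{W,v}^{-1}\xi\rangle$ into a multiple of $\langle\xi,\xi\rangle$, will produce explicit bounds of the form $A\|S_{W,v}\|_\infty^{-2}\|S_{W,v}^{-1}\|_\infty^{-2}\langle\xi,\xi\rangle$ from below and $B\|S_{W,v}\|_\infty^{2}\|S_{W,v}^{-1}\|_\infty^{2}\langle\xi,\xi\rangle$ from above. For $Q$ the same sandwich argument applies, now invoking the fact that $\{(u_i,v_i)\}$ is itself a fusion frame (with bounds that can be read off from $A$, $B$, $\|S_{W,v}\|_\infty$, and $\|S_{W,v}^{-1}\|_\infty$), and only a single application of Proposition~\ref{Prop2.6} is required to absorb the outer $S_{W,v}^{-1}$.

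For the duality identity, self-adjointness of $v_i$, $\pi_{W_i}$, and $S_{W,v}$ yields $T_i^{\ast}=v_i S_{W,v}^{-1}\pi_{W_i}S_{W,v}$, so
\[
T_i^{\ast}Q_i = v_i^2\,S_{W,v}^{-1}\pi_{W_i}S_{W,v}\pi_{u_i}S_{W,v}^{-1}.
\]
The key algebraic fact I plan to use is that $S_{W,v}\pi_{u_i}S_{W,v}^{-1}$ is an idempotent with range $S_{W,v}(u_i)=W_i$, so left-multiplication by $\pi_{W_i}$ fixes it; this collapses $T_i^{\ast}Q_i$ to $v_i^2\pi_{u_i}S_{W,v}^{-1}$, and summing over $i$ together with a canonical-dual reconstruction identity would then give $\xi$. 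The main obstacle is exactly this last step: under the literal orthogonal-projection reading of $\pi_{u_i}$, the identity $\sum_i v_i^2\pi_{u_i}=S_{W,v}$ is false in general, so the cleanest route is to interpret $\pi_{u_i}$ as the oblique projection $S_{W,v}^{-1}\pi_{W_i}S_{W,v}$ with range $u_i=S_{W,v}^{-1}W_i$ (under which $T_i^{\ast}Q_i=v_i^2 S_{W,v}^{-1}\pi_{W_i}$ and the sum closes directly via the standard reconstruction $\xi=\sum_i v_i^2 S_{W,v}^{-1}\pi_{W_i}\xi$), or equivalently to begin from that reconstruction formula and reassemble the sum.
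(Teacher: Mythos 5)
Your proposal matches the paper's proof essentially step for step: the frame bounds for $T$ (and, by the same argument, for $Q$) are obtained by the identical double application of Proposition~\ref{Prop2.6} sandwiched around the fusion-frame inequality applied to $S_{W,v}^{-1}\xi$, yielding the same constants $A\|S_{W,v}\|_{\infty}^{-2}\|S_{W,v}^{-1}\|_{\infty}^{-2}$ and $B\|S_{W,v}\|_{\infty}^{2}\|S_{W,v}^{-1}\|_{\infty}^{2}$, and the duality identity is verified exactly along your second route, via $\pi_{u_i}=S_{W,v}^{-1}\pi_{W_i}S_{W,v}$, so that $T_i^{\ast}Q_i\xi$ collapses to $v_i^{2}S_{W,v}^{-1}\pi_{W_i}\xi$ and the sum closes through $\sum_{i}v_i^{2}\pi_{W_i}=S_{W,v}$. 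The one divergence is to your credit: the paper simply asserts ``it is easy to check that $\pi_{u_i}=S_{W,v}^{-1}\pi_{W_i}S_{W,v}$,'' which is false for orthogonal projections in general (the right-hand side need not be self-adjoint unless $S_{W,v}$ commutes with $\pi_{W_i}$), whereas you explicitly flag that $\pi_{u_i}$ must be read as the oblique projection onto $u_i=S_{W,v}^{-1}W_i$ --- a gap in the paper's own argument that your proposal identifies and repairs rather than introduces.
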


\begin{proof}
	Assume that $\{W_{i}\}_{i\in J}$ has frame bounds $A$ and $ B$.
	\begin{itemize}
		\item [(1)] $T=\{T_{i}\}_{i\in J}$ is an operator frame for $Hom_{\mathcal{A}}^{\ast}(\mathcal{X})$.\\
		For any $\xi\in\mathcal{X}$, we have
		\begin{align*}
		\sum_{i\in J}\langle T_{i}\xi,T_{i}\xi\rangle&=\sum_{i\in J}\langle v_{i}S_{W,v}\pi_{W_{i}}S_{W,v}^{-1}\xi,v_{i}S_{W,v}\pi_{W_{i}}S_{W,v}^{-1}\xi\rangle\\
		&\leq\|S_{W,v}\|_{\infty}^{2}B\langle S_{W,v}^{-1}\xi,S_{W,v}^{-1}\xi\rangle
		\\&\leq B\|S_{W,v}\|^{2}\|_{\infty}S_{W,v}^{-1}\|_{\infty}^{2}\langle \xi,\xi\rangle.	
		\intertext{On the other hand,}	
		\sum_{i\in J}\langle T_{i}\xi,T_{i}\xi\rangle&=\sum_{i\in J}\langle v_{i}S_{W,v}\pi_{W_{i}}S_{W,v}^{-1}\xi,v_{i}S_{W,v}\pi_{W_{i}}S_{W,v}^{-1}\xi\rangle\\
		&\geq\|S_{W,v}^{-1}\|_{\infty}^{-2}A\langle S_{W,v}^{-1}\xi,S_{W,v}^{-1}\xi\rangle\\
		&\geq A\|S_{W,v}^{-1}\|_{\infty}^{-2}\|S_{W,v}\|_{\infty}^{-2}\langle \xi,\xi\rangle.
		\end{align*}
		Thus $T=\{T_{i}\}_{i\in J}$ is an operator frame.
		\item [(2)] $Q=\{Q_{i}\}_{i\in J}$ is also an operator frame for $Hom_{\mathcal{A}}^{\ast}(\mathcal{X})$. The proof is similar to $(1)$.
		\item [(3)] If $T_{U,v}=S_{W,v}^{-1}T_{W,v}S_{W,v}$, then $T_{U,v}^{\ast}=S_{W,v}^{-1}T_{W,v}^{\ast}S_{W,v}$ and
		$S_{U,v}=S_{W,v}$.\\
		It is easy to check that $\pi_{u_{i}}=S_{W,v}^{-1}\pi_{W_{i}}S_{W,v}$.\\
		Thus $T_{U,v}=S_{W,v}^{-1}T_{W,v}S_{W,v}, T_{U,v}^{\ast}=S_{W,v}^{-1}T_{W,v}^{\ast}S_{W,v}$ and so 
		\begin{align*}
		S_{U,v}&=T_{U,v}T_{U,v}^{\ast}\\
		&=S_{W,v}^{-1}T_{W,v}S_{W,v}S_{W,v}^{-1}T_{W,v}^{\ast}S_{W,v}\\
		&=S_{W,v}^{-1}T_{W,v}T_{W,v}^{\ast}S_{W,v}\\
		&=S_{W,v}^{-1}S_{W,v}S_{W,v}\\
		&=S_{W,v}.
		\intertext{Hence, for any $\xi\in\mathcal{X}$, we compute}
		\sum_{i\in J}T_{i}^{\ast}Q_{i}\xi&=\sum_{i\in J}v_{i}S_{W,v}^{-1}\pi_{W_{i}}S_{W,v}v_{i}S_{W,v}^{-1}\pi_{W_{i}}S_{W,v}S_{W,v}^{-1}x\\
		&=S_{W,v}^{-1}(\sum_{i\in  J}v_{i}^{2}\pi_{W_{i}}\xi)\\
		&=S_{W,v}^{-1}S_{W,v}\xi\\
		&=\xi.
		\end{align*}
		This shows that $Q=\{Q_{i}\}_{i\in J}$ is an dual operator frame of the operator frame $T=\{T_{i}\}_{i\in J}$.
	\end{itemize}
This completes the proof.
\end{proof}

\begin{theorem}
	Assume that $\{W_{i}\}_{i\in J}$ is a Parseval frame of submodules for a Hilbert $\mathcal{A}$-module $\mathcal{X}$. Then $\{v_{i}\pi_{W_{i}}\}_{i\in J}$ is an operator frame for $Hom_{\mathcal{A}}^{\ast}(\mathcal{X})$ and a dual frame of itself. 
\end{theorem}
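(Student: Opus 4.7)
Set $T_i=v_i\pi_{W_i}$ for every $i\in J$. The plan is in two steps: first verify that $\{T_i\}_{i\in J}$ is an operator frame (actually a Parseval one), and then check the duality identity \eqref{108} with $\tilde T_i=T_i$.

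For the frame inequality, I would invoke Example~3.3 (or reproduce its one-line argument): since $\{W_i\}_{i\in J}$ is a fusion frame with weights $\{v_i\}$ and bounds $A=B=1$, we have
$$\langle \xi,\xi\rangle=\sum_{i\in J}v_i^{2}\langle \pi_{W_i}\xi,\pi_{W_i}\xi\rangle, \qquad \forall \xi\in\mathcal{X}.$$
Because each $v_i$ is a positive invertible element of the center of $\mathcal{A}$, it commutes with the $\mathcal{A}$-valued inner product, and so the right-hand side equals $\sum_{i\in J}\langle v_i\pi_{W_i}\xi,v_i\pi_{W_i}\xi\rangle=\sum_{i\in J}\langle T_i\xi,T_i\xi\rangle$. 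This shows that $\{T_i\}_{i\in J}$ is a Parseval operator frame for $Hom_{\mathcal{A}}^{\ast}(\mathcal{X})$, with $A=B=1$.

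Next I would compute $T_i^{\ast}$. Since $\pi_{W_i}$ is the orthogonal projection onto an orthogonally complemented submodule it is self-adjoint and idempotent, and since $v_i$ is central and self-adjoint, a direct verification gives $T_i^{\ast}=v_i\pi_{W_i}=T_i$. Consequently
$$T_i^{\ast}T_i\xi=(v_i\pi_{W_i})(v_i\pi_{W_i})\xi=v_i^{2}\pi_{W_i}\xi,$$
using centrality of $v_i$ and $\pi_{W_i}^{2}=\pi_{W_i}$. The frame operator of $\{T_i\}_{i\in J}$ is therefore $S\xi=\sum_{i\in J}v_i^{2}\pi_{W_i}\xi=S_{W,v}\xi$, and the Parseval hypothesis gives $S=I$. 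Hence
$$\xi=S\xi=\sum_{i\in J}T_i^{\ast}T_i\xi, \qquad \forall \xi\in\mathcal{X},$$
which is precisely the duality relation \eqref{108} with $\tilde T_i=T_i$. Since $\{T_i\}_{i\in J}$ is itself an operator frame, it is a dual frame of itself.

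The only delicate point I anticipate is the justification that $T_i$ is adjointable with $T_i^{\ast}=T_i$, because this uses in an essential way that $v_i$ lies in the center of $\mathcal{A}$ and is self-adjoint; apart from that, everything reduces to the Parseval identity and the idempotence and self-adjointness of $\pi_{W_i}$.
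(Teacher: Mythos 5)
Your proof is correct, but it follows a genuinely different route from the paper's. The paper disposes of this theorem in one line: it specializes the preceding theorem --- where $T_{i}=v_{i}S_{W,v}\pi_{W_{i}}S_{W,v}^{-1}$ and $Q_{i}=v_{i}\pi_{u_{i}}S_{W,v}^{-1}$ were shown to be operator frames with $Q$ a dual of $T$ --- by observing that Parsevalness forces $S_{W,v}=I$, whereupon both families collapse to $\{v_{i}\pi_{W_{i}}\}_{i\in J}$ (using $\pi_{u_{i}}=S_{W,v}^{-1}\pi_{W_{i}}S_{W,v}=\pi_{W_{i}}$ from part (3) of that proof). You instead argue directly and self-containedly: the Parseval fusion identity together with centrality and self-adjointness of the weights gives $\sum_{i\in J}\langle T_{i}\xi,T_{i}\xi\rangle=\langle \xi,\xi\rangle$, so $\{T_{i}\}_{i\in J}$ is a Parseval operator frame; then $T_{i}^{\ast}=T_{i}$ and $T_{i}^{\ast}T_{i}=v_{i}^{2}\pi_{W_{i}}$ identify the frame operator with $S_{W,v}$, which equals $I$ (e.g.\ by $AI\leq S\leq BI$ with $A=B=1$), so $\xi=\sum_{i\in J}T_{i}^{\ast}T_{i}\xi$ is exactly \eqref{108} with $\tilde{T}_{i}=T_{i}$. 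Your version buys independence from the general duality theorem --- in particular you never need its uniform-boundedness hypotheses on $S_{W,v}$ and $S_{W,v}^{-1}$, which are vacuous here --- and it makes explicit the points the paper leaves tacit: that $v_{i}\pi_{W_{i}}$ is a bounded adjointable module map with $T_{i}^{\ast}=T_{i}$, which, as you rightly flag, hinges on $v_{i}$ being a central positive (hence self-adjoint) element. The paper's version buys brevity by reusing already-proved machinery. Both arguments are valid and reach the same conclusion.
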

\begin{proof}
	When $\{W_{i}\}_{i\in J}$ is a Parseval frame of submodules, $S_{W,v}=I$. Clearly, this theorem is a consequence of the last theorem. 
\end{proof}

\begin{theorem}	
	Let $\{T_{i}\}_{i\in I} $ and $\{R_{j}\}_{j\in J} $ be  two operator frames respectively in $Hom_{\mathcal{A}}^{\ast}(\mathcal{X})$ and $Hom_{\mathcal{A}}^{\ast}(\mathcal{Y})$, with duals $\{\tilde{T_{i}}\}_{i\in I}$ and  $\{\tilde{R_{j}}\}_{j\in J} $ respectively. Then $\{\tilde{T_{i}}\otimes\tilde{R_{j}}\}_{i,j\in I,J}$ is a dual of $\{T_{i}\otimes R_{j}\}_{i,j\in I,J}$.
\end{theorem}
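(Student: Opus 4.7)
The plan is to verify the defining identity for duals on elementary tensors and then extend by linearity and continuity, exactly parallel to the proof of the tensor product theorem in Section 4.

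First, I would unpack the definition: to show that $\{\tilde{T}_{i}\otimes\tilde{R}_{j}\}_{i,j}$ is a dual of $\{T_{i}\otimes R_{j}\}_{i,j}$, I must establish
\[
z=\sum_{i\in I,\,j\in J}(T_{i}\otimes R_{j})^{\ast}(\tilde{T}_{i}\otimes\tilde{R}_{j})\,z,\qquad \forall z\in\mathcal{X}\otimes\mathcal{Y}.
\]
Using the property $(P\otimes Q)^{\ast}=P^{\ast}\otimes Q^{\ast}$ of the external tensor product (stated in Section 4) and the multiplicativity $(P_{1}\otimes Q_{1})(P_{2}\otimes Q_{2})=P_{1}P_{2}\otimes Q_{1}Q_{2}$, the generic summand rewrites as $T_{i}^{\ast}\tilde{T}_{i}\otimes R_{j}^{\ast}\tilde{R}_{j}$.

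Next, I would specialize to a simple tensor $\xi\otimes\eta\in\mathcal{X}\otimes_{\mathrm{alg}}\mathcal{Y}$ and compute
\[
\sum_{i,j}(T_{i}^{\ast}\tilde{T}_{i}\otimes R_{j}^{\ast}\tilde{R}_{j})(\xi\otimes\eta)=\Bigl(\sum_{i\in I}T_{i}^{\ast}\tilde{T}_{i}\xi\Bigr)\otimes\Bigl(\sum_{j\in J}R_{j}^{\ast}\tilde{R}_{j}\eta\Bigr)=\xi\otimes\eta,
\]
where the last equality uses the hypothesis that $\{\tilde{T}_{i}\}$ and $\{\tilde{R}_{j}\}$ are duals of $\{T_{i}\}$ and $\{R_{j}\}$, respectively (i.e., identity \eqref{108} applied in $\mathcal{X}$ and in $\mathcal{Y}$). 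By $\mathbb{C}$-linearity the identity extends to every finite sum $z=\sum_{k=1}^{n}\xi_{k}\otimes\eta_{k}$ in $\mathcal{X}\otimes_{\mathrm{alg}}\mathcal{Y}$.

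Finally, I would pass to the completion $\mathcal{X}\otimes\mathcal{Y}$ by continuity: the operator $\Phi:=\sum_{i,j}(T_{i}\otimes R_{j})^{\ast}(\tilde{T}_{i}\otimes\tilde{R}_{j})$, acting via $\Phi=R_{T\otimes R}^{\ast}R_{\tilde{T}\otimes\tilde{R}}$ in analogy with the remark after the definition of dual, is a bounded adjointable map on $\mathcal{X}\otimes\mathcal{Y}$ (its boundedness with respect to each seminorm $\overline{p\otimes q}$ follows from the operator-Bessel bounds of the two families, which are the tensor products of the individual bounds by the Section~4 theorem). Since $\Phi$ agrees with the identity on the dense subset $\mathcal{X}\otimes_{\mathrm{alg}}\mathcal{Y}$, it equals the identity on all of $\mathcal{X}\otimes\mathcal{Y}$, which completes the proof.

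The main obstacle I anticipate is the density/continuity step: one has to ensure that the (possibly infinite) series defining $\Phi z$ converges in the pro-$C^{\ast}$-seminorm topology uniformly enough to justify extending the equality $\Phi z=z$ from $\mathcal{X}\otimes_{\mathrm{alg}}\mathcal{Y}$ to $\mathcal{X}\otimes\mathcal{Y}$. This is handled exactly as in the Section~4 tensor-product theorem, where the same density argument on finite sums in $\mathcal{X}\otimes_{\mathrm{alg}}\mathcal{Y}$ was used; the remaining work is purely algebraic manipulation on elementary tensors, which is routine.
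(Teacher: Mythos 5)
Your proposal is correct and takes essentially the same route as the paper: both verify the defining identity $\xi\otimes\eta=\sum_{i,j}(T_{i}\otimes R_{j})^{\ast}(\tilde{T}_{i}\otimes\tilde{R}_{j})(\xi\otimes\eta)$ on elementary tensors by tensoring the two dual identities and using $(P\otimes Q)^{\ast}=P^{\ast}\otimes Q^{\ast}$. If anything you are more careful than the paper, whose proof stops at elementary tensors, while you explicitly carry out the extension by linearity to finite sums in $\mathcal{X}\otimes_{\mathrm{alg}}\mathcal{Y}$ and by continuity (via the Bessel bounds) to the completion $\mathcal{X}\otimes\mathcal{Y}$ --- a step the paper leaves implicit here, though it invokes the analogous density argument in its Section~4 tensor-product theorem.
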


\begin{proof}	
	By definition, for all$ x \in \mathcal{X}$ and $ y \in \mathcal{Y}$, we have
	\begin{equation*}
	x=\sum_{i\in I}T_{i}^{\ast}\tilde{T_{i}}x  , 
	\end{equation*}
	\begin{equation*}
	y=\sum_{j\in J}R_{j}^{\ast}\tilde{R_{j}}y  
	\end{equation*}
and so  
	\begin{equation*}
	(x\otimes y)=  \sum_{i\in I}T_{i}^{\ast}\tilde{T_{i}}x\otimes\sum_{j\in J}R_{j}^{\ast}\tilde{R_{j}}y,
	\end{equation*}
	\begin{equation*}
	\sum_{i\in I}T_{i}^{\ast}\tilde{T_{i}}x\otimes\sum_{j\in J}R_{j}^{\ast}\tilde{R_{j}}y=\sum_{i,j\in I,J}T_{i}^{\ast}\tilde{T_{i}}x\otimes R_{j}^{\ast}\tilde{R_{j}}y,
	\end{equation*}	
	\begin{equation*}	
	\sum_{i\in I}T_{i}^{\ast}\tilde{T_{i}}x\otimes\sum_{j\in J}R_{j}^{\ast}\tilde{R_{j}}y=\sum_{i,j\in I,J}(T_{i}^{\ast}\otimes R_{j}^{\ast}).(\tilde{T_{i}}x\otimes\tilde{R_{j}}y),
	\end{equation*}
	\begin{equation*}
	\sum_{i\in I}T_{i}^{\ast}\tilde{T_{i}}x\otimes\sum_{j\in J}R_{j}^{\ast}\tilde{R_{j}}y=\sum_{i,j\in I,J}(T_{i}\otimes R_{j})^{\ast}.(\tilde{T_{i}}\otimes\tilde{R_{j}}).(x\otimes y).
	\end{equation*}
Thus  $\{\tilde{T_{i}}\otimes\tilde{R_{j}}\}_{i,j \in I,J}$ is a dual of $\{T_{i}\otimes R_{j}\}_{i,j \in I,J}$.
\end{proof}

\begin{corollary}	
	Let $\{T_{i,j}\}_{0\leq i\leq n; j\in J}$ be a family of operator frames with  their duals $\{\tilde{T}_{i,j}\}_{0\leq i\leq n; j\in J}$  for  $ 0 \leq i \leq n $.  Then $\{\tilde{T}_{0,j}\otimes \tilde{T}_{1,j}\otimes......\otimes\tilde{T}_{n,j}\}_{j\in J}$ is a dual of $\{T_{0,j}\otimes T_{1,j}\otimes......\otimes T_{n,j}\}_{j\in J}$.
\end{corollary}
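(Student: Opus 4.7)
The plan is to prove this corollary by induction on $n$, using the preceding theorem as both the base case and the inductive engine. Denote the candidate frame on the product space by $F_n := \{T_{0,j}\otimes T_{1,j}\otimes\cdots\otimes T_{n,j}\}_{j\in J}$ and the candidate dual by $\tilde F_n := \{\tilde T_{0,j}\otimes\tilde T_{1,j}\otimes\cdots\otimes\tilde T_{n,j}\}_{j\in J}$, viewed as families of adjointable operators on the Hilbert module $\mathcal{X}_0\otimes\mathcal{X}_1\otimes\cdots\otimes\mathcal{X}_n$ over the corresponding tensor product of pro-$C^{\ast}$-algebras.

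The base case $n=1$ is precisely the previous theorem, which gives that $\{\tilde T_{0,j}\otimes\tilde T_{1,j}\}_{j\in J}$ is a dual of $\{T_{0,j}\otimes T_{1,j}\}_{j\in J}$. For the inductive step, assume the statement holds for $n-1$. Applying the tensor-product theorem of Section~4 (which shows that a tensor product of operator frames is an operator frame) together with the induction hypothesis, both $F_{n-1}$ and $\tilde F_{n-1}$ are operator frames on $\mathcal{X}_0\otimes\cdots\otimes\mathcal{X}_{n-1}$, and $\tilde F_{n-1}$ is a dual of $F_{n-1}$. Likewise $\{T_{n,j}\}_{j\in J}$ has dual $\{\tilde T_{n,j}\}_{j\in J}$ by hypothesis. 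Applying the two-factor dual theorem (the previous result) to the pair $F_{n-1}$, $\{T_{n,j}\}_{j\in J}$ with their respective duals, we obtain that $F_{n-1}\otimes \{T_{n,j}\}_{j\in J}$ has dual $\tilde F_{n-1}\otimes\{\tilde T_{n,j}\}_{j\in J}$, which, after identifying the iterated tensor product via associativity, is exactly the statement that $\tilde F_n$ is a dual of $F_n$.

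More concretely, on elementary tensors $\xi_0\otimes\xi_1\otimes\cdots\otimes\xi_n$, the reconstruction identity
$$
\xi_0\otimes\cdots\otimes\xi_n=\sum_{j\in J}(T_{0,j}\otimes\cdots\otimes T_{n,j})^{\ast}(\tilde T_{0,j}\otimes\cdots\otimes\tilde T_{n,j})(\xi_0\otimes\cdots\otimes\xi_n)
$$
follows by combining the iterated dual relations $\xi_i=\sum_j T_{i,j}^{\ast}\tilde T_{i,j}\xi_i$ through the multiplicativity $(A_0\otimes\cdots\otimes A_n)(B_0\otimes\cdots\otimes B_n)=A_0B_0\otimes\cdots\otimes A_nB_n$ and the adjoint rule $(A_0\otimes\cdots\otimes A_n)^{\ast}=A_0^{\ast}\otimes\cdots\otimes A_n^{\ast}$, exactly as carried out in the two-factor case of the previous theorem. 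The identity then extends from elementary tensors to finite algebraic sums by linearity and from the algebraic tensor product to the full Hilbert module $\mathcal{X}_0\otimes\cdots\otimes\mathcal{X}_n$ by continuity.

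The main obstacle, and the only genuinely non-trivial bookkeeping, will be the inductive bracketing: at each step one must identify the iterated tensor product $(\mathcal{X}_0\otimes\cdots\otimes\mathcal{X}_{n-1})\otimes\mathcal{X}_n$ with $\mathcal{X}_0\otimes\cdots\otimes\mathcal{X}_n$ and ensure that $(T_{0,j}\otimes\cdots\otimes T_{n-1,j})\otimes T_{n,j}$ corresponds to $T_{0,j}\otimes\cdots\otimes T_{n,j}$ under this identification, and similarly for the duals. This is standard associativity of the external tensor product of Hilbert pro-$C^{\ast}$-modules and the compatibility of adjointable module morphisms with it (both recalled in Section~4), so the step is formal; once these identifications are made, the corollary is immediate from the previous theorem and induction.
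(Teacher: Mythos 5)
Your proposal is correct and follows the same route the paper intends: the corollary is stated there without proof, as an immediate iteration of the preceding two-factor tensor-dual theorem, which is exactly your induction on $n$ (and the associativity bookkeeping you single out is indeed the only formal point). One caveat, inherited from the paper's own notation: the single index $j\in J$ in the corollary must be read as a multi-index $(j_0,\dots,j_n)\in J^{n+1}$, in line with the theorem's doubly indexed family $\{T_{i}\otimes R_{j}\}_{i,j\in I,J}$ --- with a genuinely shared diagonal index your displayed reconstruction identity would \emph{not} follow from the factorwise relations $\xi_i=\sum_{j}T_{i,j}^{\ast}\tilde{T}_{i,j}\xi_i$ (the cross terms are missing), whereas your inductive application of the two-factor theorem correctly produces the multi-indexed family, so your argument is sound under that reading.
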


\bigskip

\section*{Declarations}

\medskip

\noindent \textbf{Availablity of data and materials}\newline
\noindent Not applicable.

\medskip

\noindent \textbf{Competing  interest}\newline
\noindent The authors declare that they have no competing interests.

\medskip

\noindent \textbf{Fundings}\newline
\noindent    The authors declare that there is no funding available fot this paper.

\medskip

\noindent \textbf{Authors' contributions}\newline
\noindent The authors equally conceived of the study, participated in its
design and coordination, drafted the manuscript, participated in the
sequence alignment, and read and approved the final manuscript. 

\medskip

\end{document}